\numberwithin{equation}{section}
\newtheorem{theorem}{Theorem}[section]
\newtheorem{lemma}[theorem]{Lemma}
\newtheorem{corollary}[theorem]{Corollary}
\theoremstyle{definition}
\newtheorem{definition}[theorem]{Definition}
\newtheorem{question}[theorem]{Question}
\newtheorem{remark}[theorem]{Remark}
\renewcommand{\le}{\ensuremath{\leqslant}}
\renewcommand{\ge}{\ensuremath{\geqslant}}
\newcommand{\N}{\mathbb{N}}
\newcommand{\R}{\mathbb{R}}
\newcommand{\C}{\mathbb{C}}
\newcommand{\K}{\mathbb{K}}
\title[Kernels of operators on classical transfinite sequence spaces]{Kernels of bounded operators on the classical transfinite Banach sequence spaces}
\author[M.~Arnott]{Max Arnott}
\address[M.~Arnott]{Zaiku Group ltd, 7th Floor, 4~St Paul's Square, Liverpool, L3~9SJ, United Kingdom}
\email{arnott.max@zaikugroup.com}
\author[N.~J.~Laustsen]{Niels Jakob Laustsen}
\address[N.~J.~Laustsen]{School of Mathematical Sciences, Fylde College, Lancaster University, Lancaster, LA1 4YF, United Kingdom}
\email{n.laustsen@lancaster.ac.uk}
\subjclass[2020]{Primary 46B26,
46B45, 
47B01;
Secondary 
18A30,
47A05.%
}
\keywords{Non-separable Banach space, transfinite sequence space, bounded operator, kernel.}
\begin{document}

\begin{abstract}
Every closed subspace of each of the Banach spaces $X = \ell_p(\Gamma)$ and  \mbox{$X=c_0(\Gamma)$}, where~$\Gamma$ is a set and $1<p<\infty$, is the kernel of a bounded operator $X\to X$.
    On the other hand, whenever~$\Gamma$ is an uncountable set, $\ell_1(\Gamma)$ contains a closed subspace that is not the kernel of any bounded operator $\ell_1(\Gamma)\to\ell_1(\Gamma)$.
\end{abstract}

\maketitle

\section{Introduction and main results}
\noindent This paper can be seen as a continuation of the line of research initiated by Laust\-sen and White  in~\cite{LW}. More precisely, our aim is to answer the following question for the classical transfinite Banach sequence spaces~$c_0(\Gamma)$ and~$\ell_p(\Gamma)$ for $1\le p<\infty$.

\begin{question}[The kernel problem]\label{kernelproblem}
Let $X$ be a Banach space. Is every closed subspace of~$X$ equal to the kernel of some bounded operator $X\to X$?
\end{question}

The main result of~\cite{LW} is that there exists a reflexive Banach space~$X$ which answers \Cref{kernelproblem} in the negative; that is, $X$~contains a closed subspace which is not the kernel of any bounded operator $X\to X$.  We refer to~\cite{LW}, as well as~\cite{White}, for the motivation behind this result, including the significance of~$X$ being reflexive. 

The space~$X$ that was used to prove this result is the dual of an ``exotic'' non-sep\-a\-ra\-ble Banach space constructed by Wark~\cite{wark1} with the property that every bound\-ed operator on it is the sum of a scalar multiple of the identity operator and an operator with separable image. It turns out that Wark's Banach space (not its dual) can also be used, as we shall explain in \Cref{S:answerforWark}.

The fact that~$X$ is non-separable is necessary; this follows from \cite[Proposition~2.1]{LW}. 
However, it is not clear that Wark's highly sophisticated Banach space is needed to obtain such an example. This raises the question: is it possible to find a simpler example of a reflexive Banach space~$X$ which contains a closed subspace that is not the kernel of any bound\-ed operator $X\to X$? Obviously~$X$ cannot be a Hilbert space, else its closed subspaces would all be complemented. Our main result is that one cannot use any of the other classical reflexive transfinite sequence spaces either. 

\begin{theorem}\label{T:main1}
    Let~$\Gamma$ be a set, and take $1<p<\infty$. Then, for every closed subspace~$Y$ of~$\ell_p(\Gamma)$, there is a bounded operator $T\colon \ell_p(\Gamma)\to\ell_p(\Gamma)$ with $\ker T = Y$.  
\end{theorem}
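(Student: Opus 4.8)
The plan is to argue by transfinite induction on the cardinality $\kappa=|\Gamma|$, the crux being a decomposition of~$\Gamma$ into coordinate blocks compatible with~$Y$. Throughout, for $A\subseteq\Gamma$ let $P_A\colon\ell_p(\Gamma)\to\ell_p(\Gamma)$ denote the norm-one coordinate projection given by restriction to~$A$, so that for any partition $\Gamma=\bigsqcup_\alpha\Delta_\alpha$ one has $\|x\|^p=\sum_\alpha\|P_{\Delta_\alpha}x\|^p$. The case of finite or countable~$\Gamma$ is the base of the induction: here $\ell_p(\Gamma)/Y$ is separable and reflexive, so its dual is separable; choosing functionals $(\phi_n)$ on $\ell_p(\Gamma)/Y$ that are dense in the unit ball of that dual and setting $\iota(z+Y)=(2^{-n}\phi_n(z+Y))_n$ defines a bounded operator $\ell_p(\Gamma)/Y\to\ell_p(\Gamma)$ (boundedness is immediate from $|2^{-n}\phi_n(\cdot)|\le 2^{-n}\|\cdot\|$ together with $\sum_n 2^{-np}<\infty$) which is injective because $(\phi_n)$ separates the points of $\ell_p(\Gamma)/Y$. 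Composing~$\iota$ with the quotient map yields~$T$ with $\ker T=Y$.

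For the inductive step, suppose $\kappa$ is uncountable and that the result holds for all smaller index sets. The key is the following \textbf{compatible decomposition lemma}: there is a continuous increasing chain $(\Gamma_\alpha)_{\alpha\le\lambda}$ of subsets of~$\Gamma$ with $\Gamma_0=\emptyset$, $\Gamma_\lambda=\Gamma$, $|\Gamma_{\alpha+1}|<\kappa$ for every $\alpha<\lambda$, and $P_{\Gamma_\alpha}(Y)\subseteq Y$ for every~$\alpha$. Granting this, put $\Delta_\alpha=\Gamma_{\alpha+1}\setminus\Gamma_\alpha$, so that $(\Delta_\alpha)_{\alpha<\lambda}$ partitions~$\Gamma$ into blocks of size $<\kappa$. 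Invariance of~$Y$ under the coordinate projections shows that $Y_\alpha:=Y\cap\ell_p(\Delta_\alpha)=P_{\Delta_\alpha}(Y)$ is the range of a bounded projection on~$Y$, hence a closed subspace of $\ell_p(\Delta_\alpha)$; since $|\Delta_\alpha|<\kappa$, the induction hypothesis provides a bounded operator $T_\alpha\colon\ell_p(\Delta_\alpha)\to\ell_p(\Delta_\alpha)$ with $\ker T_\alpha=Y_\alpha$, and after rescaling we may assume $\|T_\alpha\|\le 1$. Define $T\colon\ell_p(\Gamma)\to\ell_p(\Gamma)$ block-diagonally by $Tx=\sum_\alpha T_\alpha(P_{\Delta_\alpha}x)$.

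Two verifications then complete the step. Boundedness and the estimate $\|T\|\le 1$ follow at once from disjointness of the blocks and $p$-additivity of the norm: $\|Tx\|^p=\sum_\alpha\|T_\alpha(P_{\Delta_\alpha}x)\|^p\le\sum_\alpha\|P_{\Delta_\alpha}x\|^p=\|x\|^p$. For the kernel, disjoint supports give $Tx=0$ if and only if $P_{\Delta_\alpha}x\in\ker T_\alpha=Y_\alpha$ for every~$\alpha$; the inclusion $Y\subseteq\ker T$ uses $P_{\Gamma_\alpha}(Y)\subseteq Y$ (so $P_{\Delta_\alpha}x\in Y_\alpha$ whenever $x\in Y$), while $\ker T\subseteq Y$ uses that each $P_{\Delta_\alpha}x\in Y_\alpha\subseteq Y$, that $x=\sum_\alpha P_{\Delta_\alpha}x$ with unconditional ($p$-additive) convergence, and that~$Y$ is closed.

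The main obstacle is the compatible decomposition lemma. The family $\mathcal{C}=\{A\subseteq\Gamma:P_A(Y)\subseteq Y\}$ contains $\emptyset$ and~$\Gamma$ and is closed under unions of chains: if $A=\bigcup_i A_i$ with $(A_i)$ increasing, then for $w\in Y$ one has $P_{A_i}w\to P_Aw$ in norm (because $\operatorname{supp}w$ is countable), whence $P_Aw\in Y$; this delivers continuity at limit stages automatically. The delicate point is cofinality: every subset of~$\Gamma$ of size $<\kappa$ must be enlarged to a member of~$\mathcal{C}$ of size $<\kappa$. I expect to obtain this by the standard closing-off technique, taking $\Gamma_\alpha=M_\alpha\cap\Gamma$ for a continuous increasing chain of elementary submodels $M_\alpha\prec H(\theta)$ of size $<\kappa$ containing a fixed dense subset of~$Y$ and (a coding of)~$\Gamma$. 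The feature of~$\ell_p(\Gamma)$ that makes this work is that every vector has countable support: since $\omega\subseteq M_\alpha$ forces $M_\alpha$ to contain every element of each of its countable members, $P_{M_\alpha\cap\Gamma}$ carries $Y\cap M_\alpha$ into~$Y$, and one then argues that $P_{M_\alpha\cap\Gamma}$ maps all of~$Y$ into the closed linear span of $Y\cap M_\alpha$, hence into~$Y$. Turning this sketch into a rigorous projectional-skeleton argument is where the real work of the proof lies.
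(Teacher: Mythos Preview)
Your approach is correct in outline and takes a genuinely different organizational route from the paper, though the two share a common core ingredient.

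The paper does not induct on $|\Gamma|$. Instead it proceeds via duality: by reflexivity (\Cref{kernelreflexive}), it suffices to show that every closed subspace of $\ell_{p^*}(\Gamma)$ is the closure of the image of some bounded operator. Such subspaces are characterized (\Cref{Prop2.5}) as those admitting a spanning set $D$ with $\{d\in D: d(\gamma)\ne 0\}$ countable for each $\gamma$, and this is verified for every closed subspace via a shrinking Markushevich basis (\Cref{KeyLemmaA}, \Cref{KeyLemmaB}). The operator with dense image is then built in one shot, with no recursion.

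Your route --- transfinite induction together with a $Y$-compatible coordinate decomposition of $\Gamma$ --- also works, and is essentially the Johnson--Odell decomposition discussed in the paper's remark following the proof. One caution on your elementary-submodel sketch: a dense subset of $Y$ is not quite the right object to place in $M$; what actually drives the argument is that once $Y\in M$, elementarity hands you a shrinking Markushevich basis $(y_j)_{j\in J}$ with the indexing in $M$, and then the countability of each set $\{j: y_j(\gamma)\ne 0\}$ --- precisely \Cref{KeyLemmaA} --- forces every $y_j$ to have support either contained in or disjoint from $M\cap\Gamma$, whence $P_{M\cap\Gamma}(Y)\subseteq Y$. So both proofs rest on the same Markushevich-basis countability fact; yours wraps it in a projectional decomposition and inducts, while the paper wraps it in a spanning-set condition and builds the operator directly. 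Your approach buys a clean block-diagonal form for $T$; the paper's buys a one-step construction that avoids transfinite recursion altogether.
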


Although the original application in~\cite{White} required that the Banach space~$X$ is reflexive, \Cref{kernelproblem} makes sense --- and is equally natural --- in the non-reflexive setting. In fact,  Kalton already answered it for $X=\ell_\infty$ 50 years ago:  building on Whitley's simplified proof~\cite{Whit} of Phillips' theorem, Kalton \cite[Proposition~4]{Kalton} showed that whenever the kernel of a bounded operator $T\colon\ell_\infty\to\ell_\infty$ contains~$c_0$, we can find an infinite subset~$M$ of~$\N$ such that $\{x\in\ell_\infty : x(n)=0\ (n\in\N\setminus M)\}\subseteq \ker T$. 
Thus in particular, $c_0$ is not the kernel of any bounded operator on~$\ell_\infty$. 
Related results,  including a generalisation of Kalton's theorem to~$\ell_\infty(\Gamma)$ for uncountable index sets~$\Gamma$, have recently been obtained in~\cite{HL}. 

Complementing \Cref{T:main1}, we provide the following answers to \Cref{kernelproblem} for the non-reflexive classical transfinite sequence spaces.

\begin{theorem}\label{T:main2}
  Let~$\Gamma$ be a set. 
  \begin{enumerate}[label={\normalfont{(\roman*)}}]
  \item\label{T:main2i} For every closed subspace~$Y$ of~$c_0(\Gamma)$, there is a bounded operator $T\colon c_0(\Gamma)\to c_0(\Gamma)$ with $\ker T = Y$. 
  \item\label{T:main2ii} Suppose that~$\Gamma$ is uncountable. Then~$\ell_1(\Gamma)$ contains a closed subspace which is not the kernel of any bounded operator $\ell_1(\Gamma)\to\ell_1(\Gamma)$.
  \end{enumerate}
\end{theorem}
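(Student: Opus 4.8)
The unifying principle behind both parts is the elementary observation that, for a Banach space~$X$ and a closed subspace~$Y$, a bounded operator $T\colon X\to X$ satisfies $\ker T=Y$ if and only if $T$ factors as $T=Sq$, where $q\colon X\to X/Y$ is the quotient map and $S\colon X/Y\to X$ is a bounded \emph{injective} operator. Thus the positive statement amounts to saying that every quotient of $c_0(\Gamma)$ admits a bounded injection into $c_0(\Gamma)$, while the negative statement asserts that some quotient of $\ell_1(\Gamma)$ admits no bounded injection into $\ell_1(\Gamma)$. I would attack both parts from this common vantage point.

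For part~(i) I would first record the classical separable case: if a Banach space~$W$ has a weak$^*$-separable dual (in particular, if $W$ is separable), then choosing a weak$^*$-dense sequence $(h_n)$ in the unit ball of~$W^*$ and setting $Sw=(2^{-n}h_n(w))_n$ yields a bounded injection $W\to c_0(\N)$. To reach the non-separable quotient $Z=c_0(\Gamma)/Y$, I would build a transfinite increasing family of subsets $(\Gamma_\alpha)_{\alpha\le\mu}$ of~$\Gamma$, running from~$\emptyset$ up to~$\Gamma$ with each difference $\Gamma_{\alpha+1}\setminus\Gamma_\alpha$ countable, chosen so that the coordinate projection~$P_\alpha$ onto~$\Gamma_\alpha$ leaves~$Y$ invariant. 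The difference projections $Q_\alpha=P_{\alpha+1}-P_\alpha$ then have separable range and descend to projections~$\bar Q_\alpha$ on~$Z$; applying the separable case to each slice furnishes bounded injections $S_\alpha\colon\bar Q_\alpha Z\to c_0(\N_\alpha)$ of norm at most one on disjoint copies~$\N_\alpha$ of~$\N$, which I would assemble into $Sz=(S_\alpha\bar Q_\alpha z)_\alpha$, identifying $\bigsqcup_\alpha\N_\alpha$ with a subset of~$\Gamma$. Injectivity and the bound $\|S\|\le 1$ are then routine, and the decisive point is that the image really lands in~$c_0$: because the norm on $c_0(\Gamma)$ is the supremum norm, any lift~$x$ of~$z$ satisfies $\|Q_\alpha x\|\ge\varepsilon$ for only finitely many~$\alpha$, which forces $(\|\bar Q_\alpha z\|)_\alpha$, and hence the whole coordinate family of~$Sz$, to vanish at infinity. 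The main obstacle here is the construction of the $Y$-compatible filtration $(\Gamma_\alpha)$; I would obtain it by a closing-off argument (in the spirit of the proof of \Cref{T:main1}), enlarging each~$\Gamma_\alpha$ so as to absorb the countable supports of a dense subset of~$Y$ together with the supports of suitable norming functionals, thereby guaranteeing $P_\alpha Y\subseteq Y$.

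For part~(ii) I would exhibit an explicit bad quotient. Fix an uncountable subset $\Gamma'\subseteq\Gamma$ and let $q\colon\ell_1(\Gamma)\to c_0(\Gamma')$ be a quotient map, which exists because $\ell_1(\Gamma)$ surjects onto every Banach space of density at most~$|\Gamma|$; put $Y=\ker q$. By the reformulation above it suffices to prove that $Z=c_0(\Gamma')$ admits no bounded injection into~$\ell_1(\Gamma)$, and in fact none into any space $\ell_1(I)$. The key fact is that the unit vectors $(e_\gamma)_{\gamma\in\Gamma'}$ of $c_0(\Gamma')$ are weakly null, in the sense that every sequence of distinct~$e_\gamma$ converges weakly to zero: indeed each functional in $c_0(\Gamma')^*=\ell_1(\Gamma')$ has countable support and is absolutely summable, so it tends to~$0$ along any sequence of distinct coordinates. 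Consequently, for a bounded operator $S\colon c_0(\Gamma')\to\ell_1(I)$, every distinct-index subsequence of $(Se_\gamma)$ is weakly null and therefore, by the Schur property of $\ell_1(I)$, norm null; hence $\{\gamma:\|Se_\gamma\|\ge 1/n\}$ is finite for each~$n$, so $\{\gamma:Se_\gamma\ne 0\}$ is countable. As $\Gamma'$ is uncountable, some~$e_\gamma$ lies in $\ker S$, and $S$ fails to be injective.

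Taken together, these give \Cref{T:main2}: part~(i) is the technically substantial half, with the construction of the invariant coordinate filtration the principal hurdle, whereas part~(ii) is short once one realises that taking $c_0(\Gamma')$ as the offending quotient reduces everything to the weak nullity of the unit vector basis against the Schur property of~$\ell_1(I)$ — precisely the feature that distinguishes the non-weakly-compactly-generated space $\ell_1(\Gamma)$ from the spaces treated in \Cref{T:main1} and in part~(i).
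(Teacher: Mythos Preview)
Your reduction via the factorisation $T=Sq$ matches the paper's \Cref{quotientsandinjections}, but both halves then diverge from the paper's arguments. For~\ref{T:main2i}, the paper simply notes that $c_0(\Gamma)/Y$ is weakly compactly generated (as a quotient of a WCG space) with density character at most~$|\Gamma|$, and invokes the Amir--Lindenstrauss theorem to obtain a bounded linear injection into~$c_0(\Gamma)$. Your PRI-style filtration $(\Gamma_\alpha)$ with $P_\alpha Y\subseteq Y$ is precisely the machinery that underlies the \emph{proof} of Amir--Lindenstrauss in this special case, so you are reconstructing that theorem by hand rather than quoting it; the closing-off you sketch is the standard technique (one alternates between absorbing supports coming from $Y$ and from $Y^\perp\subseteq\ell_1(\Gamma)$), but executing it correctly --- in particular continuity at limit ordinals and the verification that the induced projections $\bar Q_\alpha$ separate points of $Z$ --- takes more care than your outline indicates. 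The paper's route is therefore much shorter, while yours would be more self-contained if completed. For~\ref{T:main2ii}, the paper takes $\ell_p(\Gamma)$ with $1<p<\infty$ as the bad quotient and rules out an injection into~$\ell_1(\Gamma)$ via Rosenthal's transfinite version of Pitt's theorem; your choice of $c_0(\Gamma')$ together with the weak-nullity/Schur argument is correct and strictly more elementary, requiring no compactness results for operators at all.
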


This result shows in particular that a Banach space~$X$ --- namely $X=c_0(\Gamma)$ for~$\Gamma$ uncountable --- may answer \Cref{kernelproblem} positively, whilst its dual $X^* =\ell_1(\Gamma)$ answers it negatively. We note that the answer for the bidual $X^{**} = \ell_\infty(\Gamma)$ is also negative by~\cite{HL}. Wark's Banach space and its dual remain the only reflexive spaces~$X$ known to contain a closed subspace that is not the kernel of any bounded operator $X\to X$. These observations motivate the following questions.

\begin{question} 
\begin{enumerate}[label={\normalfont{(\roman*)}}]
\item\label{Q:oppositeanswers}   Does there exist a Banach space $X$ for which the answer to \Cref{kernelproblem} is negative, whilst $X^*$ answers \Cref{kernelproblem} positively?
\item  In particular, is it possible to find a reflexive Banach space $X$ which has the properties described in ~\ref{Q:oppositeanswers}? 
\end{enumerate}
\end{question}

\begin{remark}
  \Cref{kernelproblem} asks whether every closed subspace~$Y$ of~$X$ must be the kernel of some bounded operator $X\to X$, and not whether~$Y$ is the kernel of a bounded operator from~$X$ into some arbitrary Banach space. This is because the quotient map $X \to X/Y$ has kernel $Y$, so the question would be trivial in this generality. 
\end{remark}

\section{Preliminaries and the proof of Theorem \ref{T:main1}}

\noindent Let $\K = \R$ or $\K = \C$ denote the scalar field. For a set~$\Gamma$, we consider the Banach spaces
\[ \ell_p(\Gamma) = \biggl\{ x\in \K^\Gamma : \sum_{\gamma\in\Gamma} \lvert x(\gamma)\rvert^p < \infty\biggr\}\qquad (1\le p<\infty) \]
and 
\[ c_0(\Gamma) = \bigl\{ x\in \K^\Gamma : \{\gamma\in\Gamma : \lvert x(\gamma)\rvert\ge\varepsilon\}\ \text{is finite for every}\ \varepsilon>0 \bigr\}. \]
For $x\in\K^\Gamma$, $\operatorname{supp} x = \{ \gamma\in\Gamma : x(\gamma)\ne 0\}$ denotes the \emph{support} of~$x$.
We shall frequently use that every element of each of the spaces  $X= \ell_p(\Gamma)$ for $1\le p<\infty$  and  $X=c_0(\Gamma)$ has countable support, a fact easily derived from the definition of~$X$. We write $(e_\gamma)_{\gamma\in\Gamma}$ for the \emph{unit vector basis} of~$X$; that is, $e_\gamma(\beta) = 1$ if $\beta = \gamma$ and $e_\gamma(\beta) =0$ otherwise. This defines a symmetric transfinite Schauder basis for~$X$, for which we let $(e_\gamma^*)_{\gamma\in\Gamma}$ denote the set of associated coordinate functionals.
 
We begin with a lemma which outlines how duality can be brought to bear on \Cref{kernelproblem} for reflexive Banach spaces. Similar ideas were used in~\cite{LW}. 

\begin{lemma}\label{kernelreflexive} 
  Let $X$ be a reflexive Banach space. Every closed subspace of~$X$ is the kernel of some bounded operator $X\to X$ if and only if every closed subspace of~$X^*$ is the closure of the image of some bounded operator $X^*\to X^*$.
\end{lemma}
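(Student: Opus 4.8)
The plan is to exploit the standard duality dictionary relating the kernel and image of a bounded operator to those of its adjoint, reinforced at two key points by the extra rigidity that reflexivity provides. For a closed subspace $Y\subseteq X$ I write $Y^\perp = \{f\in X^* : f(y)=0\ \text{for all}\ y\in Y\}$ for its annihilator, and for a closed subspace $Z\subseteq X^*$ I write ${}^\perp Z = \{x\in X : f(x)=0\ \text{for all}\ f\in Z\}$ for its pre-annihilator. I shall rely on two groups of facts. First, the bipolar identities ${}^\perp(Y^\perp)=Y$ for every closed subspace $Y\subseteq X$, and $({}^\perp Z)^\perp = Z$ for every closed subspace $Z\subseteq X^*$; the latter uses that the norm and weak$^{*}$ closures of a subspace of $X^*$ coincide when $X$ is reflexive. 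Second, the annihilator formulas $\ker T = {}^\perp(\operatorname{im} T^*)$ and $\overline{\operatorname{im} T^*} = (\ker T)^\perp$, valid for any bounded operator $T\colon X\to X$, where again reflexivity ensures that the closure in the second identity may be taken in norm rather than merely weak$^{*}$.

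For the forward implication I would start from a closed subspace $Z\subseteq X^*$ and pass to its pre-annihilator ${}^\perp Z$, a closed subspace of~$X$. The hypothesis supplies a bounded operator $T\colon X\to X$ with $\ker T = {}^\perp Z$. Taking adjoints and combining the two groups of facts gives $\overline{\operatorname{im} T^*} = (\ker T)^\perp = ({}^\perp Z)^\perp = Z$, so $Z$ is the closure of the image of the bounded operator $T^*\colon X^*\to X^*$, exactly as required.

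For the converse I would start from a closed subspace $Y\subseteq X$ and pass to its annihilator $Y^\perp\subseteq X^*$. The hypothesis supplies a bounded operator $S\colon X^*\to X^*$ with $\overline{\operatorname{im} S}=Y^\perp$. Here reflexivity enters in a second role: every bounded operator on~$X^*$ is the adjoint of a bounded operator on~$X$, so I may write $S=T^*$ for some $T\colon X\to X$ (concretely, $T$ corresponds to $S^*$ under the canonical identification $X^{**}=X$, and one checks $T^*=S$). Since the pre-annihilator of a set depends only on its closed linear span, the formula $\ker T = {}^\perp(\operatorname{im} T^*)$ and the bipolar identity yield $\ker T = {}^\perp(\operatorname{im} S) = {}^\perp(\overline{\operatorname{im} S}) = {}^\perp(Y^\perp)=Y$, completing the argument.

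The verifications of the two annihilator formulas and of the representation $S=T^*$ are all routine and standard. The only place where genuine care is needed --- and the step I would treat as the crux --- is the systematic use of reflexivity to replace weak$^{*}$ closures by norm closures, both in the identity $\overline{\operatorname{im} T^*}=(\ker T)^\perp$ and in the bipolar relation $({}^\perp Z)^\perp = Z$. Without reflexivity these hold only up to weak$^{*}$ closure, and the clean equivalence asserted by the lemma would fail; tracking this carefully is what makes the two halves of the argument line up.
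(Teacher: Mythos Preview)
Your proposal is correct and follows essentially the same route as the paper: both directions proceed by passing to the (pre-)annihilator, invoking the hypothesis, and then unwinding via the standard identity $\ker T = {}^\perp(\operatorname{im} T^*)$ together with the bipolar relations, with reflexivity used exactly where you indicate (to upgrade weak$^*$ closures to norm closures and to realise $S$ as an adjoint). The only difference is notational.
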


The proof involves the following standard notions.
The \emph{annihilator} of a subset~$Y$ of a Banach space~$X$ is 
\[ Y^\perp = \{ f \in X^* : \langle y, f\rangle = 0\ \text{for every}\ y\in Y \}, \] 
and the \emph{pre-annihilator} of a subset~$Z$ of a dual Banach space~$X^*$ is 
\[ Z_\perp = \{ x \in X : \langle x, f\rangle = 0\ \text{for every}\ f\in Z \}. \] 
Note that these definitions depend on $X$, as well as~$Y$ or~$Z$. Simple direct calculations show that~$Y^\perp$ is a weak*-closed subspace of~$X^*$, $Z_\perp$  is a norm-closed subspace of~$X$, $(Y^\perp)_\perp = \overline{\operatorname{span}}\, Y$ and $(Z_\perp)^\perp$ is the weak*-closure of $\operatorname{span} Z$. Furthermore, we shall use the well-known identity 
$\ker T = T^*[X^*]_\perp$ for any bounded operator $T\colon X\to X$. 
 
 \begin{proof}[Proof of Lemma~{\normalfont{\ref{kernelreflexive}}}]
    Suppose that every closed subspace of~$X$ is the kernel of some bound\-ed operator $X\to X$. Then, given a closed subspace~$Z$ of $X^*$, we can take a bounded operator $T\colon X\to X$ with $\ker T=Z_\perp$.    
    By reflexivity, closed subspaces are automatically weak*-closed, so 
    \[ Z = (Z_{\perp})^\perp  = (\ker T)^\perp = (T^*[X^*]_\perp)^\perp = \overline{T^*[X^*]}. \]

    On the other hand, suppose that every closed subspace of $X^*$ is the closure of the image of some bounded operator $X^*\to X^*$. Then, given a closed subspace~$Y$ of~$X$, we can take a bounded operator $S\colon X^*\to X^*$ such that $Y^\perp= \overline{S[X^*]}$. Reflexivity implies that $S = T^*$ for some bounded operator $T\colon X\to X$, and therefore  
    \[ \ker T = T^*[X^*]_\perp = S[X^*]_\perp = \overline{S[X^*]}_\perp = (Y^\perp)_\perp = Y. \qedhere   \] \end{proof}

Solving the kernel problem positively for $\ell_p(\Gamma)$ for $1<p<\infty$ therefore equates to proving that every closed subspace of $\ell_{p^*}(\Gamma)$ is the closure of the image of some bounded operator $\ell_{p^*}(\Gamma)\to\ell_{p^*}(\Gamma)$, where $p^*\in(1,\infty)$ denotes the conjugate exponent of~$p$. The similarity of structure between reflexive transfinite $\ell_p$-spaces and their duals means that we can shift our attention to subspaces which are closures of images of bounded operators rather than  kernels thereof. The following result characterises such subspaces in terms of the supports of elements in their spanning subsets.

\begin{lemma}\label{Prop2.5}
Let~$Y$ be a closed subspace of~$\ell_p(\Gamma)$ for some $1<p<\infty$ and some un\-count\-able set~$\Gamma$. The following conditions are equivalent:
\begin{enumerate}[label={\normalfont{(\alph*)}}]
    \item\label{Prop2.5a} There exists a bounded operator $\ell_p(\Gamma)\to Y$ with dense image.
    \item\label{Prop2.5d} $Y$ contains a subset $D$ for which $\overline{\operatorname{span}}\, D=Y$ and the set $\{d\in D : d(\gamma)\ne 0\}$ is countable for every $\gamma\in\Gamma$.
    \item\label{Prop2.5c} $Y$ contains a sequence of subsets $(D_n)_{n\in\mathbb{N}}$ for which $\overline{\operatorname{span}}\,\bigcup_{n\in\mathbb{N}} D_n=Y$ and 
    \begin{equation}\label{Prop2.5c:eq} \operatorname{supp}x\cap\operatorname{supp}y=\emptyset\qquad (n\in\mathbb{N},\, x,y \in D_n, \, x \neq y). \end{equation}
\end{enumerate}
\end{lemma}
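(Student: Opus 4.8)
The plan is to prove the cycle \ref{Prop2.5a}$\Rightarrow$\ref{Prop2.5d}$\Rightarrow$\ref{Prop2.5c}$\Rightarrow$\ref{Prop2.5a}, using repeatedly that every element of $\ell_p(\Gamma)$ has countable support and that the conjugate exponent $p^*$ is finite.

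To show \ref{Prop2.5a}$\Rightarrow$\ref{Prop2.5d}, I take a bounded operator $T\colon\ell_p(\Gamma)\to Y$ with dense image and set $y_\gamma = Te_\gamma$ and $D = \{\,y_\gamma : \gamma\in\Gamma\,\}\setminus\{0\}$. Continuity of~$T$ and density of $\operatorname{span}\{e_\gamma : \gamma\in\Gamma\}$ in $\ell_p(\Gamma)$ give $\overline{\operatorname{span}}\,D = \overline{T[\ell_p(\Gamma)]} = Y$. For the countability condition, I fix $\gamma\in\Gamma$ and observe that $x\mapsto e_\gamma^*(Tx)$ is a bounded linear functional on $\ell_p(\Gamma)$, hence is represented by some $g\in\ell_{p^*}(\Gamma)$, which therefore has countable support. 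Since $y_\beta(\gamma) = e_\gamma^*(Te_\beta) = g(\beta)$, only countably many $\beta$ yield $y_\beta(\gamma)\ne0$, so $\{\,d\in D : d(\gamma)\ne0\,\}$ is countable.

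The implication \ref{Prop2.5d}$\Rightarrow$\ref{Prop2.5c} rests on a colouring argument, which I expect to be the least routine step. I form the graph with vertex set~$D$ in which $d$ and $d'$ are adjacent exactly when $\operatorname{supp}d\cap\operatorname{supp}d'\ne\emptyset$. Each $d\in D$ has countable support, and by \ref{Prop2.5d} only countably many members of~$D$ are nonzero at any given coordinate; hence each vertex has only countably many neighbours. It follows that every connected component of the graph is countable, and a graph all of whose components are countable can be properly coloured with the countable palette~$\N$. Letting $D_n$ be the set of vertices receiving colour~$n$, I obtain $\bigcup_{n\in\N}D_n = D$, so $\overline{\operatorname{span}}\,\bigcup_n D_n = Y$, while any two distinct elements of a single~$D_n$ are non-adjacent and thus satisfy~\eqref{Prop2.5c:eq}.

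Finally, for \ref{Prop2.5c}$\Rightarrow$\ref{Prop2.5a} I build the operator explicitly. Discarding zero vectors and normalising (which alters neither the disjointness of supports nor the closed span), I may assume every $d\in\bigcup_n D_n$ has norm~$1$, and I write $D_n = \{\,d^n_\alpha : \alpha\in A_n\,\}$. Disjointness of supports makes each~$D_n$ span an isometric copy of $\ell_p(A_n)$, because $\|\sum_\alpha c_\alpha d^n_\alpha\|_p^p = \sum_\alpha|c_\alpha|^p$. Put $A = \{\,(\alpha,n) : n\in\N,\ \alpha\in A_n\,\}$; since $|A|\le\aleph_0\cdot|\Gamma| = |\Gamma|$, I may fix an injection $\iota\colon A\to\Gamma$. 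Choosing positive scalars $(\lambda_n)_{n\in\N}\in\ell_{p^*}(\N)$, I define $T\colon\ell_p(\Gamma)\to\ell_p(\Gamma)$ on the basis by $Te_{\iota(\alpha,n)} = \lambda_n d^n_\alpha$ and $Te_\gamma = 0$ for $\gamma\notin\iota[A]$. For finitely supported~$x$, grouping the sum by~$n$ and using disjointness within each~$D_n$ shows that, with $c_n = (\sum_\alpha|x(\iota(\alpha,n))|^p)^{1/p}$, one has $\|Tx\|_p\le\sum_n\lambda_n c_n\le\|(\lambda_n)\|_{p^*}\|(c_n)\|_p\le\|(\lambda_n)\|_{p^*}\|x\|_p$ by Hölder's inequality and injectivity of~$\iota$. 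Hence~$T$ extends to a bounded operator; its range contains every $\lambda_n d^n_\alpha$ and so has dense span equal to~$Y$, while each $Te_\gamma\in Y$ forces $T[\ell_p(\Gamma)]\subseteq Y$. The crux here is that the summability $(\lambda_n)\in\ell_{p^*}(\N)$, available precisely because $p>1$, controls the overlaps between supports belonging to different~$D_n$.
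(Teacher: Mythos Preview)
Your proof is correct and follows essentially the same route as the paper: the duality argument for \ref{Prop2.5a}$\Rightarrow$\ref{Prop2.5d} and the explicit operator construction for \ref{Prop2.5c}$\Rightarrow$\ref{Prop2.5a} match the paper's almost verbatim (the paper simply takes $\lambda_n = 2^{-n}$, which of course lies in $\ell_{p^*}(\N)$), while your graph-colouring argument for \ref{Prop2.5d}$\Rightarrow$\ref{Prop2.5c} coincides with the alternative proof the paper records in the Remark immediately following the lemma---its primary proof phrases the same idea via an equivalence relation whose classes are exactly your connected components.
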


\begin{proof}  \ref{Prop2.5a}$\Rightarrow$\ref{Prop2.5d}.
  Suppose that $T\colon\ell_p(\Gamma)\to Y$ is a bounded operator with $\overline{T[\ell_p(\Gamma)]} = Y$. We shall show that  the set $D = \{Te_\beta :\beta\in \Gamma\}$ satisfies the conditions of~\ref{Prop2.5d}. First, $\overline{\operatorname{span}}\, D=Y$ because $\{e_\beta : \beta\in \Gamma\}$ spans a dense subspace of~$\ell_p(\Gamma)$. Second, let $J\colon Y\to\ell_p(\Gamma)$ denote the inclusion map, and take $\gamma\in\Gamma$. The calculation  
  \[ (Te_\beta)(\gamma) = \langle  JTe_\beta, e_\gamma^*\rangle = \langle  e_\beta,T^*J^*e_\gamma^*\rangle = (T^*J^*e_\gamma^*)(\beta)\qquad (\beta\in\Gamma) \]
  implies that
  \[ \{d\in D : d(\gamma)\ne 0\} = 
  \{ Te_\beta : \beta\in\Gamma,\, (Te_\beta)(\gamma)\ne 0\}  = \{ Te_\beta : \beta\in\operatorname{supp}T^*J^*e_\gamma^*\}, \]
  and this set is countable because $T^*J^*e_\gamma^*\in\ell_{p^*}(\Gamma)$ has countable support, where~$p^*$ denotes the conjgugate exponent of~$p$, as before.

  \ref{Prop2.5d}$\Rightarrow$\ref{Prop2.5c}. Suppose that $D\subseteq Y$ satisfies the conditions of \ref{Prop2.5d}. Replacing $D$ with $D\setminus\{0\}$,  we may suppose that $0 \notin D$.  Define the maps
  \begin{alignat*}{2} \varphi\colon\ &\mathscr{P}(D)\to \mathscr{P}(D),\quad && B\mapsto\{d \in D :  \operatorname{supp}d\cap\operatorname{supp}b\ne \emptyset\ \text{for some}\ b\in B\},\\ 
  \psi\colon\ &D\to \mathscr{P}(D),\quad && d\mapsto \bigcup_{n \in \mathbb{N}}\varphi^n(\{d\}),
  \end{alignat*} where $\mathscr{P}(D)$ denotes the power set of $D$. 
  By hypothesis, the set $\{d \in D : d(\gamma)\neq 0\}$ is countable for every $\gamma\in\Gamma$. Combining this with the identity
  \[ \varphi(B) = \bigcup_{b\in B} \bigcup_{\gamma\in\operatorname{supp} b}\{d \in D : d(\gamma)\neq 0\} \]
  and the fact that vectors in~$\ell_p(\Gamma)$ have  countable support, we deduce that~$\varphi(B)$ is countable for every countable subset~$B$ of~$D$. By induction, it follows that $\varphi^n(B)$ is countable for every $n \in \mathbb{N}$ and $B\subseteq D$ countable. In particular, $\psi(d)$ is countable for every $d \in D$, being a countable union of countable sets.

  Define a relation~$\sim$ on~$D$ by $c\sim d$ if and only if there are $n\in\N$ and elements $b_1,\dots, b_{n+1}\in D$ such that $c=b_1$, $d=b_{n+1}$ and $\operatorname{supp} b_j\cap\operatorname{supp} b_{j+1}\ne\emptyset$ for each $j\in\{1,\dots,n\}$. It is easy to check that $\sim$ is an equivalence relation on $D$. (Re\-flex\-iv\-i\-ty relies on the fact that $0\notin D$.) 
  Let $[d] = \{ c\in D : c\sim d\}$ denote the equivalence class of $d\in D$, and observe that $[d] = \psi(d)$, so it is countable. 
  
  Choose a subset~$C$ of~$D$ such that $D/{\sim} = \{[c] : c\in C\}$ and $[c]\ne [d]$ whenever $c,d\in C$ are distinct, and enumerate each of these equivalence classes as 
  \[ [c] = \{d^c_n : n\in\mathbb{N}\}\qquad (c\in C). \]
  Two elements of~$D$ which belong to different equivalence classes must have disjoint support by the definition of~$\sim$, therefore the sequence of sets $(D_n)$ defined by $D_n=\{d^c_n : c\in C\}$ for $n \in \mathbb{N}$ satisfies the conditions of~\ref{Prop2.5c}.   
  
  \ref{Prop2.5c}$\Rightarrow$\ref{Prop2.5a}. Let $(D_n)_{n\in\mathbb{N}}$ be a sequence of subsets of~$Y$ which satisfies the conditions of~\ref{Prop2.5c}, and take $n\in\N$. We may suppose that every element of~$D_n$ has norm~$1$ by removing~$0$ and replacing the non-zero elements with their normalisations. Then~\eqref{Prop2.5c:eq} implies that the elements of~$D_n$ span an isometric copy of~$\ell_p(D_n)$ inside~$Y$; that is, we can define a linear isometry $U_n\colon \ell_p(D_n)\to Y$ by $U_n(e_d) = d$ for every $d\in D_n$, where $(e_d)_{d\in D_n}$ denotes the unit vector basis of~$\ell_p(D_n)$. (To avoid any possible confusion, let us stress that this is \emph{not} a vector-valued $\ell_p$-space; $D_n$~is sim\-ply used as an index set here.)  

  It follows from~\eqref{Prop2.5c:eq} that~$D_n$ has cardinality no greater than~$\Gamma$, and therefore the same is true for the set $A = \bigcup_{n\in\N} D_n\times\{n\}$. Take an injection $\theta\colon A\to\Gamma$, and define $\theta_n\colon D_n\to\Gamma$ by $\theta_n(d) = \theta(d,n)$ for each $n\in\N$. Then $x\mapsto x\circ\theta_n$ defines a bounded operator $\ell_p(\Gamma)\to\ell_p(D_n)$ of norm~$1$, so we can define a bounded operator by
  \[ T\colon\ \ell_p(\Gamma)\to Y,\quad x\mapsto \sum_{n=1}^\infty \frac{U_n(x\circ\theta_n)}{2^n}. \] 
  Take $n\in\N$ and $d\in D_n$. Straightforward calculations show that $e_{\theta(d,n)}\circ\theta_n = e_d$ and $e_{\theta(d,n)}\circ\theta_m = 0$ for $m\in\N\setminus \{ n\}$, so $T(2^ne_{\theta(d,n)}) = U_n(e_d) = d$. It follows that $D_n\subseteq T[\ell_p(\Gamma)]$, which implies that the image of~$T$ is dense in~$Y$. 
\end{proof}  

\begin{remark}
We include here an alternative proof of the implication \ref{Prop2.5d}$\Rightarrow$\ref{Prop2.5c} in the above theorem using graph-theoretic methods, in the hope that the reader might find the method novel or illuminating in some respect. 

The proof involves the following standard terminology from graph theory:
\begin{itemize}
   \item A \textit{graph} is an ordered pair $G=(V,E)$, where $V$ is the set of \textit{vertices} of $G$, and $E$ is a set of unordered pairs of distinct vertices called the set of \textit{edges} of~$G$. (Note that this definition of edges specifically excludes loops.)   
   \item A \textit{subgraph} of $G$ is a graph $G'= (V',E')$ for which $V'\subseteq V$ and $E'\subseteq E$. 
   \item The \textit{degree} of a vertex $v\in V$ is the cardinality of the set $\{e \in E : v \in e\}$.

   \item A \textit{colouring} of $G$ is a map $f\colon V \to C$ for some set $C$ of \textit{colours}. A colouring of $G$ is \textit{proper} if $f(x)\neq f(y)$ whenever $\{x,y\}\in E$.

   \item A \textit{path} in $G$ beginning at a vertex $v_1\in V$ and ending at $v_n \in V$ is a finite sequence of vertices $(v_1,v_2,\dots,v_n)$ for some $n \in \mathbb{N}$ such that $\{v_j,v_{j+1}\}\in E$ for every \mbox{$j \in \{1,\dots,n-1\}$}. Two vertices in $V$ are \textit{connected} if they belong to a common path, and a graph is \textit{connected} if any two vertices within it are connected.

  \item  A \textit{connected component} of $G$ is a connected subgraph of $G$ which is not a subgraph of any strictly larger connected subgraph of $G$.
    \end{itemize}

\begin{proof}[Alternative proof of \ref{Prop2.5d}$\Rightarrow$\ref{Prop2.5c}] Take a subset~$D$ of~$Y$ which satisfies the conditions of~\ref{Prop2.5d}, and define the graph $G=(V,E)$ with vertices $V=D$ and edges \[E=\{\{b,d\} \subseteq V : b \neq d\ \text{and}\  \operatorname{supp}b\cap \operatorname{supp}d\neq \emptyset \}. \] 
Each vertex $d\in V$ has countable support, and by hypothesis there are only countably many vertices in~$V$ that are supported at any given coordinate $\gamma\in\Gamma$. Hence~$d$ has degree at most~$\aleph_0$.
It follows that for each $n\in\N$, there are at most $\aleph_0^n$ many vectors connected to $d$ via a path of length~$n$, so there are at most $\bigl\lvert\bigcup_{n\in\N}\aleph_0^n\bigr\rvert = \aleph_0$ many vertices which are contained in a path beginning at~$d$. This means that the number of vertices in the connected component of~$G$ containing~$d$ is countable.

We can therefore define a proper colouring $f\colon V \to \mathbb{N}$ of $G$ with $\aleph_0$ many colours, simply using each colour at most once per connected component. Then the sets 
 \[ D_n = \{v \in V : f(v)=n\}\qquad (n \in \mathbb{N}) \] 
 partition $D$ and satisfy~\eqref{Prop2.5c:eq}.
\end{proof}
\end{remark}

We conclude this section by proving that every closed subspace of $\ell_p(\Gamma)$ contains a spanning set of the form described in \cref{Prop2.5}\ref{Prop2.5d}, thus solving the kernel problem in this setting. 

\begin{definition} A \emph{Markushevich basis} for a Banach space~$Y$ is a family $(y_j,f_j)_{j\in J}$ indexed by some set~$J$, where $y_j\in Y$ and $f_j\in Y^*$ for each $j\in J$, and the following three conditions are satisfied: 
\begin{equation}\label{D:Markushevich:eq}
  \langle y_j, f_k\rangle = \delta_{j,k}\quad (j,k\in J),\qquad \overline{\operatorname{span}}\,\{y_j : j\in J\} = Y\quad\text{and}\quad \bigcap_{j\in J}\ker f_j = \{0\}. 
\end{equation}
If the functionals $\{f_j:j\in J\}$ span a norm-dense subspace of~$Y^*$, then we say that the Mar\-ku\-she\-vich basis $(y_j,f_j)_{j\in J}$ is  \emph{shrinking}. 
\end{definition}

\begin{lemma}\label{KeyLemmaA}
  Let~$Y$ be a Banach space with a shrinking Markushevich basis $(y_j,f_j)_{j\in J}$. Then the set $\{ j\in J : \langle   y_j,f\rangle\ne 0\}$ is countable for every $f\in Y^*$.
  \end{lemma}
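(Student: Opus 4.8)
The plan is to exploit the shrinking hypothesis directly: since $\{f_j : j\in J\}$ spans a norm-dense subspace of~$Y^*$, every functional can be approximated by finite linear combinations of the~$f_j$, and biorthogonality will then force all but countably many of the coordinates $\langle y_j,f\rangle$ to vanish. Note that the spanning condition on the~$y_j$ and the condition $\bigcap_{j\in J}\ker f_j=\{0\}$ play no role; only biorthogonality and the shrinking property are needed.

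First I would fix $f\in Y^*$. Using the density of $\operatorname{span}\{f_j : j\in J\}$ in~$Y^*$, for each $n\in\N$ I would choose a finite subset $F_n\subseteq J$ together with scalars $(\lambda_k)_{k\in F_n}$ such that the functional $g_n=\sum_{k\in F_n}\lambda_k f_k$ satisfies $\lVert f-g_n\rVert<1/n$. Then I would set $S=\bigcup_{n\in\N}F_n$, which is a countable subset of~$J$, being a countable union of finite sets. The claim is that
\[ \{ j\in J : \langle y_j,f\rangle\ne 0\}\subseteq S, \]
which immediately gives the desired countability.

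To verify the claim, take any $j\in J\setminus S$. For every $n\in\N$ we have $j\notin F_n$, so biorthogonality yields $\langle y_j,g_n\rangle=\sum_{k\in F_n}\lambda_k\langle y_j,f_k\rangle=0$. Consequently
\[ \lvert\langle y_j,f\rangle\rvert = \lvert\langle y_j,f-g_n\rangle\rvert \le \lVert y_j\rVert\,\lVert f-g_n\rVert < \frac{\lVert y_j\rVert}{n}. \]
Since~$j$, and hence~$\lVert y_j\rVert$, is fixed while~$n$ ranges over~$\N$, letting $n\to\infty$ forces $\langle y_j,f\rangle=0$, completing the argument.

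The one point requiring care is that a single approximation does not suffice: the norms $\lVert y_j\rVert$ need not be bounded, so the estimate above does not show that $\langle y_j,f\rangle$ is uniformly small across~$j$. The resolution is to observe that for each \emph{fixed}~$j$ outside~$S$ the same vector~$y_j$ appears in the bound for every~$n$, so $\lVert y_j\rVert/n\to 0$ and the coordinate vanishes exactly. This is the only subtlety; the remainder is a routine combination of the triangle inequality and the biorthogonality relations.
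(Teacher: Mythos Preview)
Your proof is correct and follows essentially the same route as the paper's: approximate $f$ by a sequence $(g_n)$ in $\operatorname{span}\{f_j\}$, note that each $g_n$ is supported on a finite set of indices, and conclude that $\{j:\langle y_j,f\rangle\ne 0\}$ is contained in the countable union of these finite sets. The paper phrases the final step as the inclusion $\{j:\langle y_j,f\rangle\ne 0\}\subseteq\bigcup_n\{j:\langle y_j,g_n\rangle\ne 0\}$ rather than via your explicit estimate $\lvert\langle y_j,f\rangle\rvert<\lVert y_j\rVert/n$, but the content is the same.
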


\begin{proof}
Clearly the set   $\{ j\in J : \langle y_j,g\rangle\ne 0\}$ is finite whenever $g\in\operatorname{span} \{f_j:j\in J\}$. Since the  Markushevich  basis is shrinking, every $f\in Y^*$ is the norm limit of a sequence $(g_n)_{n\in\N}$ in $\operatorname{span} \{f_j:j\in J\}$, and therefore
\[  \{ j\in J : \langle y_j,f\rangle\ne 0\}\subseteq\bigcup_{n\in\N}\{ j\in J : \langle y_j,g_n\rangle\ne 0\} \]
is countable.
\end{proof}

\begin{remark} \Cref{KeyLemmaA} is a special case of a more general characterisation of weakly Lindel\"{o}f determined Banach spaces \cite[Theorem~5.37]{HMVZ}. We have included the short, elementary argument above to keep the presentation as self-contained as possible. 
  \end{remark}

\begin{corollary}\label{KeyLemmaB}
Every reflexive Banach space~$Y$ contains a subset~$D$ such that $\overline{\operatorname{span}}\, D = Y$ and the set $\{ d\in D : \langle d,f\rangle\ne 0\}$ is countable for every $f\in Y^*$. 
\end{corollary}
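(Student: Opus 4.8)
The plan is to deduce the corollary from \Cref{KeyLemmaA} by equipping the reflexive space~$Y$ with a \emph{shrinking} Markushevich basis. Once such a basis $(y_j,f_j)_{j\in J}$ is in hand, I would take $D = \{y_j : j\in J\}$. The biorthogonality relation $\langle y_j,f_k\rangle = \delta_{j,k}$ in~\eqref{D:Markushevich:eq} forces the vectors $y_j$ to be pairwise distinct, so for each $f\in Y^*$ the set $\{d\in D : \langle d,f\rangle\ne 0\}$ is in bijection with $\{j\in J : \langle y_j,f\rangle\ne 0\}$, which \Cref{KeyLemmaA} shows is countable; and $\overline{\operatorname{span}}\, D = \overline{\operatorname{span}}\,\{y_j : j\in J\} = Y$ is one of the defining conditions of a Markushevich basis. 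Thus the whole statement reduces to two facts: that~$Y$ admits a Markushevich basis at all, and that over a reflexive space such a basis is automatically shrinking.

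For the existence of a Markushevich basis I would appeal to the classical theory. A reflexive Banach space is weakly compactly generated, since its closed unit ball is weakly compact and spans a dense subspace, and every weakly compactly generated Banach space possesses a Markushevich basis (see, e.g., \cite{HMVZ}). This existence theorem is the substantive input, and I expect it to be the only genuine obstacle; the two remaining steps are short duality arguments of the kind already used in the proof of \Cref{kernelreflexive}.

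For the ``shrinking'' claim, let $(y_j,f_j)_{j\in J}$ be any Markushevich basis for~$Y$ and put $Z = \{f_j : j\in J\}\subseteq Y^*$. The third condition in~\eqref{D:Markushevich:eq} states precisely that $Z_\perp = \bigcap_{j\in J}\ker f_j = \{0\}$. By the identity recalled before the proof of \Cref{kernelreflexive}, the weak*-closure of $\operatorname{span} Z$ equals $(Z_\perp)^\perp = \{0\}^\perp = Y^*$. Since~$Y$ is reflexive, the norm-closed subspace $\overline{\operatorname{span}}\, Z$ is automatically weak*-closed, so it coincides with the weak*-closure of $\operatorname{span} Z$, which we have just identified as~$Y^*$. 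Hence $\{f_j : j\in J\}$ spans a norm-dense subspace of~$Y^*$; that is, the Markushevich basis is shrinking, and \Cref{KeyLemmaA} applies to yield the countability of $\{d\in D : \langle d,f\rangle\ne 0\}$ for every $f\in Y^*$, completing the argument.
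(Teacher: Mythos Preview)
Your proof is correct and follows essentially the same route as the paper: take a shrinking Markushevich basis $(y_j,f_j)_{j\in J}$ for~$Y$, set $D=\{y_j:j\in J\}$, and apply \Cref{KeyLemmaA}. The only difference is cosmetic: the paper cites the existence of a shrinking Markushevich basis for a reflexive space as a single black-box result \cite[Theorem~6.1]{HMVZ}, whereas you decompose it into ``reflexive $\Rightarrow$ WCG $\Rightarrow$ admits a Markushevich basis'' together with the short duality argument showing that any Markushevich basis on a reflexive space is automatically shrinking.
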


\begin{proof}
  Being reflexive, $Y$ admits a shrinking Markushevich  basis $(y_j,f_j)_{j\in J}$ (see, \emph{e.g.,} \cite[Theorem~6.1]{HMVZ}). Set $D = \{ y_j : j\in J\}$. Then $\overline{\operatorname{span}}\, D = Y$ by the second part of the definition~\eqref{D:Markushevich:eq}, while \Cref{KeyLemmaA} implies that the set $\{ d\in D : \langle d,f\rangle\ne 0\}$ is countable for every $f\in Y^*$. 
\end{proof} 

\begin{proof}[Proof of Theorem {\normalfont{\ref{T:main1}}}]
  Take a closed subspace~$Y$ of~$\ell_{p^*}(\Gamma)$, where 
  $p^*\in(1,\infty)$ denotes the conjugate exponent of~$p$. Then~$Y$ is reflexive, so \Cref{KeyLemmaB} shows that it 
  contains a subset~$D$ such that $\overline{\operatorname{span}}\, D = Y$ and the set $\{ d\in D : \langle d, f\rangle\ne 0\}$ is countable for every $f\in Y^*$. In particular, choosing $f = e_\gamma^*|_Y$ for $\gamma\in\Gamma$, we see that~$D$ satisfies the  conditions of \Cref{Prop2.5}\ref{Prop2.5d}. Therefore, by \Cref{Prop2.5}\ref{Prop2.5a}, we can take a bounded operator $\ell_{p^*}(\Gamma)\to Y$ with dense image. Now the conclusion follows from \Cref{kernelreflexive}. 
\end{proof}

\begin{remark}
    After we completed the research presented in this section and communicated our findings in a webinar, Bill Johnson informed us that it is known that every closed subspace~$Y$ of~$\ell_p(\Gamma)$ for $1<p<\infty$ is isometrically isomorphic to the $\ell_p$-sum of a family of separable Banach spaces. With this result at hand, it is easy to construct a bounded operator $\ell_p(\Gamma)\to Y$ with dense image and consequently deduce \Cref{T:main1} from \Cref{kernelreflexive}, as above.     
    
    Johnson referred us to \cite[Section~4]{JohnsonOdell} for said result, where it can be found within the proof of \cite[Theorem~3]{JohnsonOdell}, whose statement concerns the structure of subspaces of~$L_p(\mu)$ for $2<p<\infty$ in the non-separable case.     
    Inspection of this proof reveals not only that the parts which are needed to express $Y\subseteq\ell_p(\Gamma)$ as the $\ell_p$-sum of a family $(Y_\xi)_{\xi\in\Xi}$  of separable Banach spaces work equally well for $1<p<2$, but also that our arguments are very similar to those Johnson and Odell employed 50 years ago. 
    
    We feel that this result deserves wider publicity, so let us outline its proof. 
    Like us, Johnson and Odell begin by taking a  Markushevich  basis $(y_j,f_j)_{j\in J}$ for~$Y$, which they normalise, so that $\lVert y_j\rVert = 1$ for each $j\in J$, and they then justify that the set \mbox{$\{ j\in J : y_j(\gamma)\ne0\}$} is countable for each $\gamma\in\Gamma$, as we did in the proof of \Cref{KeyLemmaB}/\Cref{T:main1}. Their next step is to recursively define a partition of the index set~$J$ into a disjoint union $J=\bigcup_{\xi\in\Xi}K_\xi$, for some index set~$\Xi$, where~$K_\xi$ is countable for each $\xi\in\Xi$ and 
    \begin{equation}\label{Eq:WBJEO}
        \operatorname{supp} y_j\cap \operatorname{supp} y_k=\emptyset\qquad (j\in K_\xi,\,k\in K_\eta,\, \xi,\eta\in\Xi,\,\xi\ne\eta).
    \end{equation}
    Bearing in mind that $D = \{ y_j : j\in J\}$ in our proof \Cref{T:main1}, we see that this partition corresponds to our collection of distinct equivalence classes \mbox{$\{ [c] : c\in C\}$} in the proof of \Cref{Prop2.5}\ref{Prop2.5d}$\Rightarrow$\ref{Prop2.5c}, as we can take $\Xi = C$ and $K_c = \{ j\in J : y_j\in [c]\}$.  
    
    Johnson and Odell now complete their proof by setting $Y_\xi = \overline{\operatorname{span}}\, \{y_k : k\in K_\xi\}$ for each $\xi\in\Xi$ (which in our notation translates to $Y_{c} = \overline{\operatorname{span}}\, [c]$ for $c\in C$). This is a separable subspace of~$Y$ because~$K_\xi$ is countable, and we can define an isometric isomorphism $U\colon\bigl(\bigoplus_{\xi\in\Xi}Y_\xi\bigr)_{\ell_p(\Xi)}\to Y$ as follows. For $z = (z_\xi)_{\xi\in\Xi}$, where $z_\xi\in Y_\xi$ for each $\xi\in\Xi$ and $z_\xi =0$ for all but finitely many~$\xi$, set $Uz = \sum_{\xi\in\Xi}z_\xi$, and observe that $U$ is linear with $\lVert Uz\rVert = \bigl(\sum_{\xi\in\Xi}\lVert z_\xi\rVert^p\bigr)^{1/p}$ by~\eqref{Eq:WBJEO}. Therefore~$U$ extends to a linear isometry $\bigl(\bigoplus_{\xi\in\Xi}Y_\xi\bigr)_{\ell_p(\Xi)}\to Y$, which is surjective because its image is closed and dense, as it contains $\bigcup_{\xi\in\Xi}Y_\xi\supseteq\{y_j : j\in J\}$. 
\end{remark}

Considering the generality of \cref{kernelreflexive}, one may ask if a result analogous to \cref{Prop2.5} can be obtained to address the kernel problem for other reflexive transfinite sequence spaces. We propose exploring the following case as an intriguing and more elaborate next step.

\begin{question}
    Let $X$ be a reflexive, transfinite Lorentz sequence space. Is every closed subspace of~$X$ the kernel of a bounded operator $X\to X$? 
\end{question}

\section{Proof of Theorem~\ref{T:main2}}
\noindent Our strategy for resolving the kernel problem for the non-reflexive transfinite sequence spaces~$c_0(\Gamma)$ and~$\ell_1(\Gamma)$ will be to check them against the following simple lemma.

\begin{lemma}\label{quotientsandinjections}
Let~$Y$ be a closed subspace of a Banach space~$X$. Then $Y=\ker T$ for some bounded operator $T\colon X\to X$ if and only if there exists a bounded linear injection $X/Y\to X$.
\end{lemma}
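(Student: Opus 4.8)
The plan is to prove the equivalence by exploiting the fundamental relationship between operators out of the quotient $X/Y$ and operators on $X$ that annihilate $Y$. Let me denote by $q\colon X\to X/Y$ the quotient map, which is a bounded surjection with $\ker q = Y$.

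For the forward direction, suppose $Y = \ker T$ for some bounded operator $T\colon X\to X$. Since $Y\subseteq\ker T$, the operator $T$ factors through the quotient: there is a unique bounded linear map $\widetilde{T}\colon X/Y\to X$ satisfying $T = \widetilde{T}\circ q$. I would then argue that $\widetilde{T}$ is injective. Indeed, if $\widetilde{T}(x+Y)=0$, then $Tx = \widetilde{T}(qx) = 0$, so $x\in\ker T = Y$, which means $x+Y = 0$ in $X/Y$. Thus $\widetilde{T}\colon X/Y\to X$ is the desired bounded linear injection. The boundedness of $\widetilde{T}$ is standard from the universal property of the quotient norm (explicitly, $\lVert\widetilde{T}\rVert = \lVert T\rVert$), so this direction requires only the first isomorphism theorem for Banach spaces.

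For the reverse direction, suppose there exists a bounded linear injection $S\colon X/Y\to X$. I would simply set $T = S\circ q\colon X\to X$, which is bounded as a composition of bounded operators. Computing the kernel: $x\in\ker T$ if and only if $S(qx) = 0$, which by injectivity of $S$ holds if and only if $qx = 0$, i.e.\ $x\in\ker q = Y$. Hence $\ker T = Y$, as required.

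I do not expect any serious obstacle here; the lemma is essentially a restatement of the universal property of quotients combined with the observation that an operator on $X$ killing exactly $Y$ is the same data as an injection defined on $X/Y$. The only points deserving care are verifying that the induced map $\widetilde{T}$ is genuinely well-defined and bounded (which follows because the quotient norm is precisely designed to make $q$ a quotient map), and noting in both directions that the target space of the injection and of $T$ is the same space $X$ — this is what makes the lemma useful for the subsequent analysis of $c_0(\Gamma)$ and $\ell_1(\Gamma)$, where one asks whether $X/Y$ embeds into $X$.
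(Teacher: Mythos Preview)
Your proof is correct and follows essentially the same approach as the paper: both directions use the factorisation through the quotient map, with the forward direction invoking the fundamental isomorphism theorem to obtain the injection $x+Y\mapsto Tx$, and the reverse direction composing the given injection with the quotient map. The only difference is notational.
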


\begin{proof}
Suppose that $Y=\ker T$ for some bounded operator $T\colon X\to X$. Then the fundamental isomorphism theorem tells us that we can define a bounded linear injection $X/Y\to X$ by $x+Y\mapsto Tx$. 

On the other hand, suppose that $R\colon X/Y \to X$ is a bounded linear injection. Then the map 
$T\colon X\to X$, $x\mapsto R(x+Y)$, is a bounded operator with $\ker T = Y$.
\end{proof}

We begin with~$c_0(\Gamma)$. The following notion will be the cornerstone of our approach. 

\begin{definition} A Banach space~$X$ is \emph{weakly compactly generated} (or \emph{WCG} for short) if it contains a weakly compact subset~$W$ such that $\overline{\operatorname{span}}\,W = X$.
\end{definition}

Separable Banach spaces and reflexive Banach spaces are WCG. Furthermore, if there is a bounded operator $X\to Z$ with dense image and~$X$ is WCG, then so is~$Z$; this implies in particular that the class of WCG Banach spaces is closed under taking quotients. Proofs of these easy facts can be found in \cite[p.~575]{BST}, with the remainder of \cite[Chapter~13]{BST} providing a comprehensive introduction to this topic. 

Perhaps the quintessential examples of WCG Banach spaces are the spaces~$c_0(\Gamma)$ for an arbitrary set~$\Gamma$; they are WCG because $W = \{e_\gamma : \gamma \in \Gamma\}\cup\{0\}$ is a weakly compact subset of~$c_0(\Gamma)$ with dense span. Their importance in the study of WCG Banach spaces is due to the following famous theorem of Amir and Lindenstrauss~\cite{AmirLind}.

\begin{theorem}\label{ALWCG}
    Every WCG Banach space admits a bounded linear injection in\-to~$c_0(\Gamma)$ for some set~$\Gamma$.
\end{theorem}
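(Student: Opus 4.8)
The plan is to realise the desired injection as a ``coordinate'' map $x\mapsto(\langle x,f_\gamma\rangle)_{\gamma\in\Gamma}$ for a carefully chosen family of functionals, so that the whole argument reduces to manufacturing those functionals. First I would normalise the problem: replacing the generating weakly compact set~$W$ by the closed absolutely convex hull of $W\cup\{0\}$ --- which is again weakly compact by Krein's theorem (the closed absolutely convex hull of a weakly compact set is weakly compact) --- and rescaling, I may assume that~$W$ is absolutely convex, weakly compact, contained in the unit ball~$B_X$, and satisfies $\overline{\operatorname{span}}\,W=X$.

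Next I would record a soft reduction that isolates where the work lies. Any uniformly bounded family $(f_\gamma)_{\gamma\in\Gamma}\subseteq B_{X^*}$ defines a bounded operator $T\colon X\to\ell_\infty(\Gamma)$, $Tx=(\langle x,f_\gamma\rangle)_{\gamma\in\Gamma}$. Since $c_0(\Gamma)$ is a closed subspace of $\ell_\infty(\Gamma)$ and $T$ is bounded, $\{x\in X: Tx\in c_0(\Gamma)\}$ is a closed subspace of~$X$; hence to force $T[X]\subseteq c_0(\Gamma)$ it suffices to check $Tw\in c_0(\Gamma)$ for every $w\in W$, because $W$ spans a dense subspace. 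Moreover $T$ is injective precisely when $(f_\gamma)$ is total over~$X$. Thus everything comes down to the following \emph{key lemma}: there is a total family $(f_\gamma)_{\gamma\in\Gamma}\subseteq B_{X^*}$ such that, for every $w\in W$ and $\varepsilon>0$, the set $\{\gamma\in\Gamma:|\langle w,f_\gamma\rangle|\ge\varepsilon\}$ is finite. Totality is the easy part --- one enlarges the family so that it separates the points of a dense subset while keeping it inside $B_{X^*}$ --- so the substance of the lemma is the decay condition.

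To prove the decay I would coordinatise~$W$ one scale at a time. Writing $\Gamma=\bigsqcup_{n\in\N}\Gamma_n$, for each $n$ I would use Zorn's lemma to select a maximal biorthogonal system $(x_\gamma,g_\gamma)_{\gamma\in\Gamma_n}$ drawn from the weakly compact set $2^nW$, with $x_\gamma\in 2^nW$, $g_\gamma\in B_{X^*}$, $\langle x_\gamma,g_\gamma\rangle\ge1$ and $\langle x_\delta,g_\gamma\rangle=0$ for $\delta\ne\gamma$, and then set $f_\gamma=2^{-n}g_\gamma$. The decay within a fixed scale is where weak compactness is indispensable: given $w$ and $\varepsilon$, if $\langle w,g_\gamma\rangle\ge\varepsilon$ held for infinitely many~$\gamma$, I would extract via the Eberlein--\v{S}mulian theorem a weakly convergent sequence $x_{\gamma_k}\rightharpoonup x^\ast\in 2^nW$; biorthogonality forces $\langle x^\ast,g_{\gamma_k}\rangle=0$ for every~$k$, so that $\langle w-x^\ast,g_{\gamma_k}\rangle\ge\varepsilon$ while the weakly null sequence $x_{\gamma_k}-x^\ast$ remains biorthogonal to the $g_{\gamma_k}$, and the aim is to turn this configuration into a contradiction with the weak compactness of~$W$ (for instance through Grothendieck's double-limit criterion). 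The $2^{-n}$ weights then assemble the scales: for fixed $w$ only finitely many scales contribute above~$\varepsilon$, and within each of those the step just sketched bounds the number of large coordinates.

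This last point is exactly the step I expect to be the main obstacle: pinning down the decay --- equivalently, that a point of a weakly compact set has $c_0$-small coordinates with respect to such a biorthogonal system --- is the entire analytic content of the theorem, and coaxing it out of bare biorthogonality is delicate (it is precisely where weak compactness, rather than mere countable-support phenomena as in \Cref{KeyLemmaA}, is used). If the direct maximal-system argument proves recalcitrant, the safer route is to feed~$W$ into the Davis--Figiel--Johnson--Pe\l czy\'nski interpolation, factoring the inclusion of $\overline{\operatorname{span}}\,W$ through a reflexive space~$R$ with $W\subseteq B_R$; the Hilbertian flavour of the interpolation norm yields a Bessel-type inequality for an appropriate total family, giving the required finiteness for free and reducing matters to the reflexive case, where a projectional resolution of the identity splits~$R$ into separable pieces, each of which injects into a copy of $c_0(\N)$ in the elementary way (a $2^{-n}$-weighted total sequence in the dual ball).
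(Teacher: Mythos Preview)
The paper does not prove this statement. \Cref{ALWCG} is quoted as the classical theorem of Amir and Lindenstrauss, with a bare citation to~\cite{AmirLind}, and is then used as a black box in the proof of \Cref{c0finallemma}. There is no argument in the paper to compare your proposal against.

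For what it is worth, your outline points in the right direction but is not a proof. The reduction in your second paragraph --- that it suffices to find a total family $(f_\gamma)\subseteq B_{X^*}$ with the $c_0$-decay property on the generating set~$W$ --- is correct and standard. However, the substance of the theorem is exactly the step you flag as ``the main obstacle'' and do not carry out: your sentence ``the aim is to turn this configuration into a contradiction with the weak compactness of~$W$'' is an intention, not an argument, and the biorthogonal-system manoeuvre you sketch does not obviously close. Your fallback via Davis--Figiel--Johnson--Pe\l czy\'nski factorisation followed by a projectional resolution of the identity on the reflexive intermediate space is a genuine route to the theorem, but again you only name the ingredients; the phrase ``Hilbertian flavour \ldots\ Bessel-type inequality'' does not accurately describe how the DFJP construction yields the decay, and the PRI step --- which is where Amir and Lindenstrauss themselves did the work, directly on the WCG space --- is left as a one-line allusion. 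So as a plan this is fine, but as a proof it has the same status as the paper's treatment: it defers to known machinery rather than supplying it.
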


We shall require a slight refinement of it, which allows us to control the cardinality of the set~$\Gamma$ by the \emph{density character} of the Banach space~$X$, that is, the smallest cardinality of a dense subset of~$X$. 

\begin{lemma}\label[lemma]{c0finallemma}
    Let~$X$ be a WCG Banach space with density character~$\Delta$. Then there exists a bounded linear injection $X \to c_0(\Delta)$.
\end{lemma}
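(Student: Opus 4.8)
The plan is to refine the Amir--Lindenstrauss theorem (\Cref{ALWCG}) by producing a bounded linear injection into $c_0(\Gamma)$ whose index set~$\Gamma$ can be taken of cardinality at most~$\Delta$. Amir--Lindenstrauss already supplies \emph{some} set~$\Gamma$; the task is purely to trim~$\Gamma$ down to size~$\Delta$. The guiding observation is that the image of a bounded linear injection $R\colon X\to c_0(\Gamma)$ is determined by the family of coordinate functionals $(e_\gamma^*\circ R)_{\gamma\in\Gamma}\subseteq X^*$: a point $x\in X$ is in $\ker R$ precisely when $\langle x, e_\gamma^*\circ R\rangle = 0$ for every $\gamma\in\Gamma$. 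Thus injectivity means exactly that the family $\{e_\gamma^*\circ R : \gamma\in\Gamma\}$ separates the points of~$X$. The strategy is therefore to extract a subfamily of these functionals that is both \emph{point-separating} and of cardinality at most~$\Delta$, and then to build the desired injection from that subfamily.

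First I would invoke \Cref{ALWCG} to fix a bounded linear injection $R\colon X\to c_0(\Gamma)$ for some set~$\Gamma$, and set $f_\gamma = e_\gamma^*\circ R\in X^*$ for each $\gamma\in\Gamma$; injectivity of~$R$ says that $\{f_\gamma : \gamma\in\Gamma\}$ separates points of~$X$. Next I would fix a dense subset~$\{x_\delta : \delta<\Delta\}$ of~$X$ of cardinality~$\Delta$ (using the definition of the density character). The key step is to select, for each pair of distinct dense points, a single functional that distinguishes them: for $\delta,\delta'<\Delta$ with $x_\delta\ne x_{\delta'}$, the point-separating property yields some $\gamma\in\Gamma$ with $f_\gamma(x_\delta)\ne f_\gamma(x_{\delta'})$, and I would choose one such~$\gamma$. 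Let $\Gamma_0\subseteq\Gamma$ be the set of all indices chosen in this way; since there are at most~$\Delta$ many pairs (as $\Delta$ is infinite, $\Delta\times\Delta$ has cardinality~$\Delta$), we have $\lvert\Gamma_0\rvert\le\Delta$.

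The remaining point is to verify that the reduced family $\{f_\gamma : \gamma\in\Gamma_0\}$ still separates \emph{all} of~$X$, not merely the dense subset. This is where a small continuity/density argument is needed, and it is the step I expect to require the most care. Suppose $x,y\in X$ are distinct; I want $f_\gamma(x)\ne f_\gamma(y)$ for some $\gamma\in\Gamma_0$. The natural approach is to consider the bounded operator $S\colon X\to c_0(\Gamma_0)$ defined by $(Sx)(\gamma) = f_\gamma(x)$ for $\gamma\in\Gamma_0$ --- one must first check this lands in $c_0(\Gamma_0)$, which follows because $Sx$ is the restriction of $Rx\in c_0(\Gamma)$ to~$\Gamma_0$. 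Then $S$ is bounded, and by construction $Sx_\delta\ne Sx_{\delta'}$ whenever $x_\delta\ne x_{\delta'}$; I would argue that $\ker S$ is a closed subspace meeting the dense set $\{x_\delta\}$ only where it is forced to, and deduce $\ker S = \{0\}$. Concretely, if $\ker S\ne\{0\}$ then it contains a nonzero vector~$z$, and by density one can find dense points approximating $0$ and $z$ whose $S$-images must then nearly coincide, contradicting the separation we arranged on the dense set together with the continuity of~$S$; after normalising, $S$ itself (or a suitable scalar multiple) is the required bounded linear injection $X\to c_0(\Delta)$, using any injection $\Gamma_0\to\Delta$ to relabel the index set.
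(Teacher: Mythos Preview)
Your injectivity argument for~$S$ has a genuine gap. The assertion that a family of continuous linear functionals which separates the points of a \emph{dense} subset must separate all points of~$X$ is false, and your sketched density argument (``$S$-images must then nearly coincide, contradicting the separation'') does not yield a contradiction: knowing only that $Sx_\delta\ne Sx_{\delta'}$ for distinct dense points gives no lower bound on $\lVert Sx_\delta - Sx_{\delta'}\rVert$, so nothing prevents both images from approaching~$0$.

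Here is a concrete failure of your construction. Take $X=\R^2$ (WCG, density character~$\aleph_0$), and let $R\colon\R^2\to c_0$ be the bounded linear injection $R(x,y)=(x+\sqrt{2}\,y,\ x-\sqrt{2}\,y,\ 0,\ 0,\ldots)$, so that $f_1(x,y)=x+\sqrt{2}\,y$ and $f_2(x,y)=x-\sqrt{2}\,y$. Choose the dense set $D=\mathbb{Q}^2$. For any two distinct rational points the irrationality of~$\sqrt{2}$ forces $f_1$ alone to distinguish them, so your procedure permits choosing $\gamma=1$ for \emph{every} pair; then $\Gamma_0=\{1\}$ and $S(x,y)=x+\sqrt{2}\,y$, which vanishes on $(\sqrt{2},-1)$ and is therefore not injective.

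The fix --- which is exactly what the paper does --- is to take $\Gamma_0=\bigcup_{\delta<\Delta}\operatorname{supp} Rx_\delta$ instead of one coordinate per pair. This set still has cardinality at most $\Delta\cdot\aleph_0=\Delta$ because each $Rx_\delta\in c_0(\Gamma)$ has countable support, and now injectivity of~$S$ is automatic: the vectors in~$c_0(\Gamma)$ supported in~$\Gamma_0$ form a closed subspace containing every~$Rx_\delta$, hence by density and continuity every~$Rx$ is supported in~$\Gamma_0$, so $Sx=0$ forces $Rx=0$ and thus $x=0$.
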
    
\begin{proof}
  The result is trivial for $X=\{0\}$, so we may suppose that $X$ is non-zero, meaning that~$\Delta$ is an infinite cardinal number. Take a dense subset~$W$ of~$X$ of car\-di\-nal\-i\-ty~$\Delta$, and apply \Cref{ALWCG} to find a bounded linear injection \mbox{$T\colon X \to c_0(\Gamma)$} for some set~$\Gamma$. Since every element of~$c_0(\Gamma)$ has countable support, the subset
  $A = \bigcup_{w \in W} \operatorname{supp} Tw$ of~$\Gamma$ has cardinality at most~$\Delta$. The density of~$W$ in~$X$ implies that $\operatorname{supp}Tx\subseteq A$ for every $x \in X$. Therefore $T[X]$ is contained in~$\overline{\operatorname{span}}\,\{e_\alpha : \alpha \in A\}$, and this subspace embeds isometrically into~$c_0(\Delta)$ because $\lvert A\rvert\le\Delta$.  
\end{proof}

\begin{proof}[Proof of Theorem~{\normalfont{\ref{T:main2}\ref{T:main2i}}}]
Let~$Y$ be a closed subspace of~$c_0(\Gamma)$. The result is trivial if~$\Gamma$ is finite, so we may suppose that~$\Gamma$ is infinite. Then~$c_0(\Gamma)$ is WCG and has density character~$\lvert\Gamma\rvert$, so~$c_0(\Gamma)/Y$ is WCG and has density character $\Delta\le\lvert\Gamma\rvert$.  \Cref{c0finallemma} tells us that there is a bounded linear injection \mbox{$c_0(\Gamma)/Y\to c_0(\Delta)$}. Now the conclusion follows from  \cref{quotientsandinjections} because~$c_0(\Delta)$ embeds isometrically into~$c_0(\Gamma)$.
\end{proof}

Turning our attention to the proof of \Cref{T:main2}\ref{T:main2ii}, our starting point will be the following minor extension of a theorem of Rosenthal~\cite{rose2}. 

\begin{lemma}\label{L:largekernel} Let $T\colon\ell_p(\Gamma)\to\ell_q(\Gamma)$ be a bounded operator, where $1\le q<p<\infty$ and~$\Gamma$ is a set.  Then~$T$ is compact, and~$\Gamma$ contains a countable subset~$A$ such that
\begin{equation}\label{L:largekernel:eq1}
  \overline{\operatorname{span}}\,\{ e_\gamma : \gamma\in\Gamma\setminus A\}\subseteq\ker T. 
\end{equation} 
\end{lemma}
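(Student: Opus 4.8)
The plan is to reduce the whole statement to the separable case, where it is governed by the classical theorem of Pitt: for $1\le q<p<\infty$ every bounded operator between the separable spaces $\ell_p$ and $\ell_q$ is compact. (This is the separable case of the result of Rosenthal being extended.) The genuinely new ingredient is the countable set~$A$, and the heart of the matter is the claim that $A:=\{\gamma\in\Gamma : Te_\gamma\ne 0\}$ is \emph{countable}. Granting this, the span containment~\eqref{L:largekernel:eq1} is immediate: for $\gamma\in\Gamma\setminus A$ we have $Te_\gamma=0$, so by linearity and continuity of~$T$ the entire closed span of $\{e_\gamma:\gamma\in\Gamma\setminus A\}$ lies in~$\ker T$.

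To prove that~$A$ is countable I would argue by contradiction. If~$A$ were uncountable, then writing $A=\bigcup_{k\in\N}\{\gamma : \lVert Te_\gamma\rVert>1/k\}$ forces one of these sets to be uncountable; hence there are $\varepsilon>0$ and a sequence $(\gamma_n)_{n\in\N}$ of \emph{distinct} indices with $\lVert Te_{\gamma_n}\rVert\ge\varepsilon$ for all~$n$. Since $q\ge 1$ forces $p>1$, the sequence $(e_{\gamma_n})_{n\in\N}$ is weakly null in $\ell_p(\Gamma)$: for any $f\in\ell_{p^*}(\Gamma)$ we have $\langle e_{\gamma_n},f\rangle=f(\gamma_n)\to 0$ because $f$ has countable support and the~$\gamma_n$ are distinct. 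I would then restrict~$T$ to the separable subspace $Z=\overline{\operatorname{span}}\,\{e_{\gamma_n}:n\in\N\}$, which is isometric to~$\ell_p$. As each $Te_{\gamma_n}$ has countable support, the set $B=\bigcup_{n\in\N}\operatorname{supp}Te_{\gamma_n}$ is countable and $T[Z]\subseteq\overline{\operatorname{span}}\,\{e_\beta:\beta\in B\}$, which is isometric to the separable space~$\ell_q$. Thus $T|_Z$ is a bounded operator between separable $\ell_p$- and $\ell_q$-spaces, so Pitt's theorem makes it compact; a compact operator carries the weakly null sequence $(e_{\gamma_n})$ to a norm-null sequence, contradicting $\lVert Te_{\gamma_n}\rVert\ge\varepsilon$. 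Hence~$A$ is countable.

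Finally, the compactness of~$T$ follows by the same localisation. Let $P\colon\ell_p(\Gamma)\to\ell_p(\Gamma)$ be the norm-one coordinate projection onto $\overline{\operatorname{span}}\,\{e_\gamma:\gamma\in A\}$. Since~$T$ vanishes off~$A$ we have $T=T\circ P$, and $T\circ P$ is the composition of the bounded projection~$P$, whose range is isometric to the separable space~$\ell_p(A)$, with the restriction of~$T$ to that range. This restriction maps into the separable space $\ell_q(C)$, where $C=\bigcup_{\gamma\in A}\operatorname{supp}Te_\gamma$ is countable, so Pitt's theorem shows it is compact; composing a compact operator with the bounded projection~$P$ keeps~$T$ compact.

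The step I expect to be the main obstacle is the countability of~$A$: this is exactly where the hypothesis $q<p$ (hence $p>1$) is used, and one must take care to arrange that the restricted operator genuinely maps a \emph{separable} $\ell_p$-space into a \emph{separable} $\ell_q$-space so that Pitt's theorem applies — once this localisation is in place the transfinite index set~$\Gamma$ plays no further role. Everything else is routine bookkeeping with supports together with the elementary fact that the unit vector basis of $\ell_p(\Gamma)$ is weakly null when $p>1$.
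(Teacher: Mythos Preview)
Your argument is correct, but it follows a genuinely different route from the paper's. The paper first invokes Rosenthal's transfinite version of Pitt's theorem to obtain compactness of~$T$ directly, then applies Schauder's theorem to conclude that~$T^*$ is compact and hence has separable image; the countable set~$A$ is then chosen so that $T^*[\ell_{q^*}(\Gamma)]\subseteq\overline{\operatorname{span}}\,\{e_\alpha^*:\alpha\in A\}$, and~\eqref{L:largekernel:eq1} follows from the duality computation $(Te_\gamma)(\beta)=\langle e_\gamma,T^*e_\beta^*\rangle$. By contrast, you reverse the logical order: you first isolate the countable set $A=\{\gamma:Te_\gamma\ne0\}$ by a direct contradiction argument that localises to separable subspaces and appeals only to the \emph{classical} Pitt theorem, and then deduce compactness of~$T$ from the factorisation $T=T\circ P$ through~$\ell_p(A)$, again via the separable Pitt theorem. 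Your approach is more self-contained in that it avoids both the citation of Rosenthal's transfinite result and the passage through the adjoint; the paper's proof is shorter and exploits duality to sidestep the contradiction argument entirely.
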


\begin{proof}
Pitt's theorem and Rosenthal's transfinite version of it \cite[Theorem A2]{rose2} tell us that~$T$ is compact. By Schauder's theorem, its adjoint $T^*\colon \ell_{q^*}(\Gamma)\to\ell_{p^*}(\Gamma)$ is also compact, where $1<p^*<q^*\le\infty$ denote the conjugate exponents of~$p$ and~$q$, re\-spec\-tive\-ly. Consequently~$T^*$ has separable image, so we can find a countable subset~$A$ of~$\Gamma$ such that \mbox{$T^*[\ell_{q^*}(\Gamma)]\subseteq\overline{\operatorname{span}}\, \{e_\alpha^* : \alpha\in A\}$}. Hence we have
\[ (Te_\gamma)(\beta) = \langle Te_\gamma, e_\beta^*\rangle = \langle e_\gamma, T^*e_\beta^*\rangle = 0\qquad (\beta\in\Gamma,\,\gamma\in\Gamma\setminus A), \] which proves~\eqref{L:largekernel:eq1}. 
\end{proof}

\begin{corollary}\label{noinjections} Let~$\Gamma$ be an uncountable set, and take $p,q\in[1,\infty)$.
There is a bounded linear injection from~$\ell_p(\Gamma)$ into~$\ell_q(\Gamma)$ if and only if $p\le q$. 
\end{corollary}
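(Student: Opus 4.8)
The plan is to prove the two implications separately, with the substantive content of the backward direction already packaged in \Cref{L:largekernel}.

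For the forward direction, suppose $p \le q$. I would invoke the elementary norm inequality $\lVert x\rVert_q \le \lVert x\rVert_p$, which holds for every $x \in \ell_p(\Gamma)$: since each such~$x$ has countable support, this reduces at once to the classical scalar inequality for sequences. In particular $\ell_p(\Gamma) \subseteq \ell_q(\Gamma)$, and the formal inclusion map $J\colon \ell_p(\Gamma) \to \ell_q(\Gamma)$ is a bounded operator of norm at most~$1$. Being an inclusion it is injective, which is exactly what is required.

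For the backward direction, I would argue by contraposition. Suppose $p > q$, so that $1 \le q < p < \infty$, and let $T\colon \ell_p(\Gamma) \to \ell_q(\Gamma)$ be an arbitrary bounded operator. \Cref{L:largekernel} then supplies a countable subset~$A$ of~$\Gamma$ with $\overline{\operatorname{span}}\,\{e_\gamma : \gamma \in \Gamma \setminus A\} \subseteq \ker T$. Because~$\Gamma$ is uncountable while~$A$ is countable, the set $\Gamma \setminus A$ is non-empty (indeed uncountable), so $e_\gamma \in \ker T$ for any $\gamma \in \Gamma \setminus A$; in particular $\ker T \ne \{0\}$ and~$T$ is not injective. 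Hence no bounded linear injection $\ell_p(\Gamma) \to \ell_q(\Gamma)$ can exist when $p > q$, which is the contrapositive of the required implication.

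I do not expect a genuine obstacle here, since all the hard analysis (Pitt-type compactness and the existence of the exceptional set) is done in \Cref{L:largekernel}. The uncountability of~$\Gamma$ is used precisely to ensure that the countable set~$A$ furnished by that lemma cannot exhaust~$\Gamma$; the only points requiring care are keeping the direction of the inequality $p \le q$ straight and confirming that the inclusion map is genuinely bounded rather than merely well defined as a set map.
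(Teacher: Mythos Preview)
Your proposal is correct and follows essentially the same argument as the paper: the formal inclusion handles the case $p\le q$, while \Cref{L:largekernel} combined with the uncountability of~$\Gamma$ rules out injections when $p>q$. One cosmetic quibble: what you call the ``forward direction'' is the implication $\Leftarrow$ (from $p\le q$ to the existence of an injection), and your ``backward direction'' is the contrapositive of~$\Rightarrow$; the paper labels them the other way round, but the mathematics is identical.
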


\begin{proof} 
The (contrapositive of the) implication $\Rightarrow$ is immediate from \Cref{L:largekernel}; indeed, for 
$1\le q<p<\infty$, \eqref{L:largekernel:eq1} implies that no bounded operator $\ell_p(\Gamma)\to\ell_q(\Gamma)$ is in\-jec\-tive because $\Gamma\setminus A$ is non-empty for every countable subset~$A$ of~$\Gamma$.  

The converse implication is clear because for $1\le p\le q<\infty$, the formal inclusion operator $\ell_p(\Gamma)\to\ell_q(\Gamma)$ is a bounded linear injection of norm~$1$. 
\end{proof}

\begin{proof}[Proof of Theorem~{\normalfont{\ref{T:main2}\ref{T:main2ii}}}]
  Let~$\Gamma$ be an uncountable set, and take $1<p<\infty$. Since $\ell_p(\Gamma)$ has density character~$\lvert\Gamma\rvert$, we can find a bounded linear surjection $S\colon\ell_1(\Gamma)\to \ell_p(\Gamma)$. Then $\ell_1(\Gamma)/\ker S\cong \ell_p(\Gamma)$ by the fundamental isomorphism theorem, so \Cref{noinjections} implies that no bounded linear injection $\ell_1(\Gamma)/\ker S\to \ell_1(\Gamma)$ exists. Hence $\ker S$ is a closed subspace of~$\ell_1(\Gamma)$ which by \cref{quotientsandinjections} is not the kernel of any bounded operator $X\to X$.
  \end{proof}

\section{Wark's Banach space admits a closed subspace that is not a kernel of any bounded operator on it}\label{S:answerforWark}

\noindent
Recall from the introduction that the main result of~\cite{LW} states that the dual~$E_W^*$ of Wark's reflexive, non-separable Banach space~$E_W$ with ``few operators'' admits  a closed subspace that is not the kernel of any bounded operator $E_W^*\to E_W^*$. 
Naturally, this raises the question whether the same holds true for Wark's space~$E_W$ itself. The aim of this section is to verify that it does and, moreover, it  follows easily from  what was already proved in~\cite{LW}.  Before we  justify this claim, let us clarify the precise meaning of the term ``few operators''.  

\begin{definition}\label{D:fewops}
   A non-separable Banach space~$X$ has \emph{few operators} if every bounded operator $X\to X$ is the sum of a scalar multiple of the identity operator and a bounded operator with separable image.
\end{definition}

We can now give the result from~\cite{LW} that we rely on;  it is not stated explicitly therein, but follows by combining \cite[Corollary~2.8]{LW} with the  sentence preceding \cite[Remark~2.5]{LW}, which in the above terminology says: ``the only properties of $E_W$ that we shall make use of are that it is reflexive, non-separable, and [has few operators]''.

\begin{theorem}[Laustsen--White]\label{T:LW}
  Let $X$ be a non-separable, reflexive Banach space which has few operators. Then~$X^*$ contains a closed subspace which is not of the form $\bigcap_{j=1}^n \ker T_j$ for any choice of $n\in\N$ and bounded operators $T_1,\ldots,T_n\colon X^*\to X^*$. 
\end{theorem}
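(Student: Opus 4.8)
The plan is to describe exactly which closed subspaces of $X^*$ can arise as finite intersections $\bigcap_{j=1}^n\ker T_j$, and then to exhibit one lying outside this class; the argument rests on the fact that the few-operators hypothesis severely restricts the available kernels. I would begin by transferring the structure of operators to the dual. Since $X$ is reflexive, every bounded operator on $X^*$ is the adjoint $T^*$ of a unique $T\colon X\to X$, and by hypothesis $T=\lambda I+R$ with $R$ of separable range; thus every operator on $X^*$ has the form $\lambda I+R^*$. Moreover $R^*$ again has separable range: factoring $R=\iota R_0$ through $M_0:=\overline{R[X]}$, the subspace $M_0$ is separable and reflexive, so $M_0^*$ is separable and $R^*=R_0^*\iota^*$ maps into the separable set $R_0^*[M_0^*]$.

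Next I would classify the finite intersections. Writing $T_j=\lambda_j I+R_j^*$ with each $R_j\colon X\to X$ of separable range, there are two cases. If some $\lambda_j\neq0$, then $f\in\ker T_j$ forces $f=-\lambda_j^{-1}R_j^*f\in R_j^*[X^*]$, so $\ker T_j$, and hence $\bigcap_{j=1}^n\ker T_j$, is separable. If instead $\lambda_j=0$ for all $j$, then $\ker R_j^*=\bigl(\overline{R_j[X]}\bigr)^\perp$ is the annihilator of a separable subspace, so $\bigcap_{j=1}^n\ker T_j=M^\perp$ where $M:=\overline{R_1[X]+\dots+R_n[X]}$ is separable. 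Consequently every subspace of the prohibited form is either separable, or equal to $M^\perp$ for some separable closed $M\subseteq X$; in the latter case its pre-annihilator $(M^\perp)_\perp=M$ is separable.

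It therefore suffices to construct a closed subspace $Y\subseteq X^*$ that is non-separable and has non-separable pre-annihilator $Y_\perp$, since such a $Y$ is of neither type. I would take $Y=M^\perp$, where $M$ is chosen so that both $M$ and $X/M$ are non-separable; then $Y_\perp=M$ and $Y=M^\perp\cong(X/M)^*$ are non-separable, using that $X/M$ is reflexive. To build $M$, recall that the reflexive space $X$ admits a Markushevich basis $(x_j,f_j)_{j\in J}$, as in the proof of \Cref{KeyLemmaB}; I normalise so that $\lVert x_j\rVert=1$, and note that $\lvert J\rvert$ is uncountable because $X$ is non-separable. Splitting $J=A\sqcup B$ into two uncountable sets and setting $M=\overline{\operatorname{span}}\{x_j:j\in A\}$, biorthogonality yields $\lVert f_j|_M-f_k|_M\rVert\geq1$ for distinct $j,k\in A$ and $\lVert f_j-f_k\rVert\geq1$ for distinct $j,k\in B$, the latter functionals lying in $M^\perp$ since each $f_j$ with $j\in B$ vanishes on $\{x_i:i\in A\}$. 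Hence $M^*$ and $(X/M)^*=M^\perp$ each contain an uncountable $1$-separated set and so are non-separable; as $M$ and $X/M$ are reflexive, both spaces are non-separable too.

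The principal obstacle is this final construction: one must split $X$ into two genuinely non-separable pieces, and it is the Markushevich basis of the reflexive space, together with the elementary biorthogonality estimate, that delivers such a splitting. Everything else reduces to the two-case classification of finite intersections of kernels, which in turn hinges only on the reflexivity, non-separability, and few-operators hypotheses on $X$; the argument thus reproduces, at the stated level of generality, the theorem of Laustsen and White.
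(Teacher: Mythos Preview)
The paper does not prove this theorem; it is imported from~\cite{LW} as a black box, with the remark that it follows from \cite[Corollary~2.8]{LW} together with the observation preceding \cite[Remark~2.5]{LW}. There is therefore no in-paper proof to compare against directly.

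Your argument is correct and self-contained, and it is in fact close in spirit to what the surrounding material in the present paper develops. Your step showing that~$R^*$ has separable image is precisely the content of \Cref{L:asplund}/\Cref{C:fewopduality}\ref{C:fewopduality1}, which the paper proves independently (for the purpose of deducing \Cref{C:fewopskernel} from \Cref{T:LW}). The classification of finite intersections of kernels into ``separable'' versus ``annihilator of a separable subspace'' is the natural dichotomy forced by the few-operators hypothesis, and your construction of a subspace avoiding both types via a Markushevich-basis split of the index set is exactly the kind of device one expects here; the biorthogonality estimate $\lVert f_j-f_k\rVert\geq1$ is clean and does the job. In short, you have reconstructed a complete proof of the cited result rather than merely invoking it, and nothing in your argument is flawed.
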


 Our next result uses the term \emph{Asplund space}, which means a Banach space for which every separable subspace has separable dual. 

 \begin{lemma}\label{L:asplund} Let $T\colon X\to Y$ be a bounded operator from a Banach space~$X$ into an Asp\-lund space~$Y$, and suppose that~$T$  has separable image. Then the adjoint operator $T^*\colon Y^*\to X^*$ has separable image as well.
\end{lemma}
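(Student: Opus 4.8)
The plan is to reduce everything to the separable subspace $Z=\overline{T[X]}$ of~$Y$, use the Asplund hypothesis to see that $Z^*$ is separable, and then realise $T^*[Y^*]$ as a continuous linear image of $Z^*$. The key observation is that an operator with separable image factors through a closed \emph{separable} subspace, and that separability transfers to the dual under the Asplund assumption.

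First I would factor~$T$ through its closed range. Writing $J\colon Z\hookrightarrow Y$ for the inclusion of $Z=\overline{T[X]}$ and $S\colon X\to Z$ for the corestriction of~$T$, one has $T=JS$ and hence $T^*=S^*J^*$. Since~$Z$ is a separable subspace of the Asplund space~$Y$, its dual~$Z^*$ is separable by the very definition of an Asplund space recalled above.

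Next I would pin down $T^*[Y^*]$ exactly, not merely bound it. The adjoint $J^*\colon Y^*\to Z^*$ is the restriction map $f\mapsto f|_Z$, which is surjective by the Hahn--Banach theorem. Consequently
\[ T^*[Y^*]=S^*\bigl[J^*[Y^*]\bigr]=S^*[Z^*]. \]
Finally, $S^*\colon Z^*\to X^*$ is a bounded linear map and~$Z^*$ is separable, so its image $S^*[Z^*]$ is separable as well, a countable dense subset of~$Z^*$ being carried onto one of the image. This gives that $T^*[Y^*]=S^*[Z^*]$ is separable, as required.

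The only step that requires care is the surjectivity of~$J^*$: it is what upgrades the trivial inclusion $T^*[Y^*]\subseteq S^*[Z^*]$ to the equality used above, and it is precisely here that the Hahn--Banach extension property of the closed subspace~$Z$ enters. Everything else is routine, since continuous linear images of separable spaces are separable.
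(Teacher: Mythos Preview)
Your proof is correct and follows essentially the same approach as the paper: factor $T=JS$ through the separable subspace $Z=\overline{T[X]}$, use the Asplund hypothesis to get $Z^*$ separable, and conclude that $T^*=S^*J^*$ has separable image because it factors through~$Z^*$. The Hahn--Banach step establishing $J^*[Y^*]=Z^*$ and hence the \emph{equality} $T^*[Y^*]=S^*[Z^*]$ is a harmless extra refinement; the inclusion $T^*[Y^*]\subseteq S^*[Z^*]$ already suffices, which is all the paper uses.
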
  

 \begin{proof} Let $Z = \overline{T[X]}$, which is a separable subspace of the Asplund space~$Y$, so its dual~$Z^*$ is also separable. Now let $\widetilde{T}\colon X\to Z$ be the operator~$T$ viewed as an op\-er\-a\-tor into~$Z$, and let $J\colon Z\to Y$ be the inclusion map. Then $T=J\widetilde{T}$, so $T^* = \widetilde{T}^*J^*$, which has separable image because it factors through the separable space~$Z^*$. 
\end{proof}

\begin{corollary}\label{C:fewopduality}
\begin{enumerate}[label={\normalfont{(\roman*)}}]
\item\label{C:fewopduality1} A bounded operator between reflexive Banach spaces has separable image if and only if its adjoint has separable image.    
\item\label{C:fewopduality2} A non-separable, reflexive Banach space has few operators if and only if its dual space has few operators.
\end{enumerate}  
\end{corollary}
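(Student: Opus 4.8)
The plan is to deduce both parts from \Cref{L:asplund}, using the two standard facts that every reflexive Banach space is Asplund and that the class of reflexive spaces is closed under passing to duals.

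For part~\ref{C:fewopduality1}, let $T\colon X\to Y$ be a bounded operator between reflexive spaces. First I would record that every reflexive space is Asplund: each of its separable subspaces is itself reflexive, and a separable reflexive space has separable dual. Granting this, the forward implication is immediate from \Cref{L:asplund} applied to~$T$, since~$Y$ is Asplund. For the converse, suppose that $T^*\colon Y^*\to X^*$ has separable image. As $X^*$ is again reflexive, hence Asplund, \Cref{L:asplund} applied to~$T^*$ shows that $T^{**}\colon X^{**}\to Y^{**}$ has separable image. Identifying $X^{**}=X$ and $Y^{**}=Y$ via the canonical isometries, under which $T^{**}$ becomes~$T$, I conclude that~$T$ has separable image.

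For part~\ref{C:fewopduality2}, I would first observe that the statement is symmetric: since $X^{**}=X$ by reflexivity, it suffices to prove that if~$X$ has few operators then so does~$X^*$, because applying this to~$X^*$ (which is again non-separable and reflexive) then yields the reverse implication. So assume that~$X$ has few operators. I would note that~$X^*$ is non-separable (otherwise $X=X^{**}$ would be separable) and reflexive, so it remains to show that every bounded operator on~$X^*$ is a scalar multiple of the identity plus an operator with separable image. Given a bounded operator $S\colon X^*\to X^*$, reflexivity of~$X$ ensures that $S=T^*$ for some bounded operator $T\colon X\to X$. Writing $T=\lambda I_X+R$ with~$R$ of separable image, and taking adjoints, gives $S=\lambda I_{X^*}+R^*$, whereupon part~\ref{C:fewopduality1} guarantees that $R^*$ has separable image, completing the argument.

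The only delicate points are the bookkeeping with the canonical identification $X^{**}=X$ and the fact that, for reflexive~$X$, the adjoint correspondence $T\mapsto T^*$ is a bijection from the bounded operators on~$X$ onto those on~$X^*$ (the surjectivity being a direct consequence of reflexivity). Neither presents any genuine obstacle, so I expect no truly hard step here; the substance of the corollary is really carried by \Cref{L:asplund}.
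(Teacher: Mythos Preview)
Your proof is correct and follows essentially the same approach as the paper: both parts are deduced from \Cref{L:asplund} together with the facts that reflexive spaces are Asplund and that $T^{**}=T$ under the canonical identifications. The paper's proof is simply a terser version of yours, stating only that~\ref{C:fewopduality1} follows from \Cref{L:asplund} plus these observations and that~\ref{C:fewopduality2} is clear from~\ref{C:fewopduality1}; you have filled in exactly the details the paper leaves implicit.
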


\begin{proof} \ref{C:fewopduality1}. This follows by combining \Cref{L:asplund} with the easy observations that reflexive Banach spaces are Asplund and $T^{**}=T$ for every bounded operator~$T$ between reflexive Banach spaces.

\ref{C:fewopduality2}. This is clear from~\ref{C:fewopduality1}.
\end{proof}

\begin{corollary}\label{C:fewopskernel} Let $X$ be a non-separable, reflexive Banach space which has few operators. Then~$X$ contains a closed subspace which is not of the form $\bigcap_{j=1}^n \ker T_j$ for any choice of $n\in\N$ and bounded operators $T_1,\ldots,T_n\colon X\to X$.  
  
\end{corollary}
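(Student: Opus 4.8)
The plan is to apply \Cref{T:LW} not to $X$ itself but to its dual space $X^*$, and then to transport the resulting subspace back to $X$ via the canonical identification $X^{**}=X$ afforded by reflexivity. The whole content of the corollary is this dualisation; \Cref{C:fewopduality} has been assembled precisely so that the three hypotheses of \Cref{T:LW} pass from $X$ to $X^*$.

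First I would verify that $X^*$ satisfies those hypotheses. It is reflexive, since the dual of a reflexive Banach space is reflexive. It has few operators by \Cref{C:fewopduality}\ref{C:fewopduality2}. Finally, it is non-separable: by the classical fact that a Banach space with separable dual is itself separable, the non-separability of $X$ forces the non-separability of $X^*$ (here one does not even need reflexivity).

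Having checked these, I would apply \Cref{T:LW} with $X^*$ in the role of the non-separable, reflexive space with few operators. This produces a closed subspace of $(X^*)^*$ which is not of the form $\bigcap_{j=1}^n\ker T_j$ for any $n\in\N$ and any bounded operators $T_1,\dots,T_n\colon(X^*)^*\to(X^*)^*$. Reflexivity identifies $(X^*)^*$ isometrically with $X$, and under this identification the bounded operators on $(X^*)^*$ correspond exactly to the bounded operators on $X$; so the subspace just obtained is the required closed subspace of $X$.

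I do not expect a genuine obstacle here, as the argument is a clean passage to the dual. The only step deserving a moment's care is the non-separability of $X^*$, but this is immediate from the result quoted above. One might also flag, for completeness, that the correspondence between operators on $(X^*)^*$ and operators on $X$ respects finite intersections of kernels, which is clear since it is induced by an isometric isomorphism.
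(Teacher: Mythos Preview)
Your proposal is correct and follows exactly the paper's route: apply \Cref{T:LW} to~$X^*$ (using \Cref{C:fewopduality}\ref{C:fewopduality2} to transfer the few-operators hypothesis) and then identify $(X^*)^*$ with~$X$ via reflexivity. You are simply more explicit than the paper about verifying that~$X^*$ is non-separable, which the paper leaves implicit.
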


\begin{proof} This follows by combining \Cref{C:fewopduality}\ref{C:fewopduality2}  with \Cref{T:LW} and the fact that $X^{**}=X$ by reflexivity. \end{proof}

\subsection*{Acknowledgements} This paper was written as part of the first-named author's PhD at Lan\-caster Uni\-ver\-sity. He acknowledges with thanks the funding from the EPSRC (grant number EP/R513076/1) that supported his studies.
 
We are grateful to Bill Johnson for making us aware of the results of~\cite{JohnsonOdell} and to 
the referee for their very careful reading of our paper, and in particular for suggesting \Cref{KeyLemmaB} and its proof to us, which helped us simplify the original proof of \Cref{T:main1}.

\end{document}